\documentclass[11pt]{article}
\usepackage{amsfonts,epsfig}
\usepackage{amsfonts,epsfig,a4wide}
\usepackage{amsfonts,a4wide}
\usepackage{graphics}

\usepackage{amsmath,amssymb,amsthm}
\usepackage{url}

\usepackage{euscript,color}

\newtheorem{thm}{Theorem}
\newtheorem{prop}{Proposition}

\newtheorem{lem}{Lemma}

\newtheorem{rem}{Remark}
\newtheorem{Prob}{Problem}

\newtheorem{Ex}{Example}
\newtheorem{Def}{Definition}

\newcommand{\R}{\mathbb{R}}

\newcommand{\omo}{\operatorname{\gamma}}

\newcommand{\weg}[1]{}

\hoffset 0 truecm

\setlength{\hfuzz}{3pt}
\setlength{\headheight}{2pt} \setlength{\headsep}{29pt}
\setlength{\footskip}{28pt} \setlength{\textwidth}{540pt}
\setlength{\textheight}{636pt} \setlength{\marginparsep}{1pt}
\setlength{\marginparpush}{1pt} \setlength{\oddsidemargin}{-41pt}
\setlength{\marginparwidth}{55pt} \setlength{\evensidemargin}{-41pt}
\setlength{\topmargin}{-15pt} \setlength{\footnotesep}{8.4pt}

\usepackage{url,hyperref}

\begin{document}

\title{On the extendability of parallel sections of linear connections}

\author{Antonio J. Di Scala\thanks{Dipartimento di Scienze Matematiche ``G.L. Lagrange", Politecnico di Torino,
Corso Duca degli Abruzzi 24, 10129 Torino (Italy), {\tt antonio.discala@polito.it}}, Gianni Manno\thanks{Dipartimento di Matematica, Universit\`a degli
    studi di Padova, Via Trieste, 63, 35121 Padova, Italy,
    \texttt{gianni.manno@math.unipd.it}}}

\date{\today}
\maketitle

\begin{abstract}
Let $\pi:E\to M$ be a vector bundle over a simply connected manifold and $\nabla$ a linear connection in $\pi$. Let $\sigma: U \rightarrow E$ be a $\nabla$-parallel section of $\pi$ defined on a connected open subset $U$ of $M$. We give sufficient conditions on $U$ in order to extend $\sigma$ to the whole $M$. We mainly concentrate to the case when $M$ is a $2$-dimensional simply connected manifold.

\end{abstract}

\noindent
\textbf{MSC 2010:} 14J60, 53C29.

\medskip\noindent
\textbf{Keywords:} Linear connections, parallel sections, extendability.

\section{Introduction}
Many interesting problems in Differential Geometry can be formulated in terms of the existence of \emph{parallel sections}
of suitable defined linear connections in some vector bundles.
For example, if $(M,g)$ is a pseudo-Riemannian manifold, the existence of a Killing vector field turns out to be equivalent to the existence of a parallel section of a connection $\widetilde{\nabla}$ in the vector bundle $TM\oplus\mathfrak{so}(TM)$ over $M$ (which is canonically isomorphic to $TM\oplus\Lambda^2M$ through the metric) defined as follows (see also \cite{BFP02,CO09,Ko55}):
\begin{equation}\label{eq.conn.Kostant}
\widetilde{\nabla}_X(Y,\mathcal{A}):= \bigg( \nabla_XY-\mathcal{A}(X),\,\nabla_X\mathcal{A}-\mathrm{R}(X,Y)\bigg)
\end{equation}
where $X$ is a vector field on $M$, $\nabla_X$ is the covariant derivative associated with the Levi-Civita connection of $(M,g)$ and $\mathrm{R}$ the curvature.

\smallskip\noindent
More generally, also a projective vector field, i.e. a vector field which preserves geodesics as unparametrized curves, is a parallel section of a suitable linear connection in the adjoint \emph{tractor bundle} associated with the projective structure \cite{private} (the general theory
behind this is developed in \cite{C08}, where the projective case is briefly discussed). In the area of the tractor calculus we can find more interesting examples.
For instance, the property of $(M, g)$ of being conformally flat is equivalent to the flatness of the tractor connection \cite{BEG94}. More generally, the existence of an Einstein metric in the conformal class $[g]$ amounts to find a parallel section of such connection \cite{G10}.

\noindent
Sometimes it is possible to show the existence of a parallel section on an open dense subset $U$ of the base manifold $M$ and the question whether it is possible to extend it to the whole $M$ naturally arises.
Standard examples (see Section \ref{sec.counter}) show that this is not possible in general so that suitable assumptions must be made on $U$ and $M$. More often it is possible to prove the existence of a parallel section in a neighborhood of almost every point of $M$, so that the problem of gluing  together such sections in a global one naturally appears. Such situation comes out, for instance, when studying the singular (local) action of the Lie algebra of projective vector fields on a $2$-dimensional pseudo-Riemannian manifold $(M,g)$. Indeed, in \cite{BMM08} it has been proved that the set of points on which such action is locally regular (i.e. points possessing a neighborhood foliated by orbits of constant dimension) forms an open dense subset of $M$ and that a Killing vector field (that we recall is a parallel section of connection \eqref{eq.conn.Kostant}) exists in a neighborhood of any regular point.

\smallskip\noindent
Motivated by the above questions we propose the following problem.

\begin{Prob}\label{problem}
Let $\pi:E\to M$ be a vector bundle over a simply connected manifold $M$ and $\nabla$ a linear connection in $\pi$.
Let  $\sigma: U \rightarrow E$ be a non-zero $\nabla$-parallel section defined on a connected, open and dense subset $U$ of $M$. Does $\sigma$ extend to a parallel section on the whole $M$?
\end{Prob}

\noindent
There are two cases in which the above problem admits a not difficult positive solution.
One of them is the case when $\pi$ is a rank-one vector bundle. In such a case, the existence of a non-zero parallel section on a dense subset implies that the curvature tensor $\mathrm{R}^{\nabla}$ vanishes identically, so that the connection is flat and $\sigma$ can be extended on the whole $M$. The second case is when $\pi$ has rank-two and it is provided with a metric compatible with $\nabla$. In this case, the existence of a parallel section $\sigma$ on a dense subset implies that $\mathrm{R}^{\nabla}$ vanish identically and again $\sigma$ can be extended on the whole $M$.

\smallskip\noindent
In Section \ref{sec.counter} we give examples showing that the hypothesis of connectedness of $U$ and simply-connectedness of $M$ cannot be removed. Anyway, if the complement $M \setminus U$ has higher codimension we can drop the hypothesis of simply-connectedness. Namely, our first result is contained in the following theorem.

\begin{thm}\label{higherCodimension}Let $\pi:E\to M$ be a vector bundle over a connected manifold $M$ and $\nabla$ a linear connection in $\pi$.
Let  $\sigma: U \rightarrow E$ be a non-zero $\nabla$-parallel section defined on an open subset $U$ of $M$ whose complement $F:=M \setminus U$ is contained in a smooth submanifold of codimension greater or equal to $2$.
Then $\sigma$ can be extended on the whole $M$ as a parallel section.
\end{thm}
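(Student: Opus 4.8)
The plan is to reduce the global statement to a purely local extension problem across the singular set and then glue. Write $c=\operatorname{codim}N\ge 2$ for the submanifold $N\supseteq F$. Since a codimension $\ge 2$ submanifold has empty interior, $M\setminus N\subseteq U$ is dense in $M$, and two continuous sections of $E$ that agree on the dense set $M\setminus N$ agree wherever both are defined. Hence it suffices to show that every point $p\in F$ has a neighborhood on which $\sigma$ admits a parallel extension: the resulting local extensions coincide with $\sigma$ on $M\setminus N$, therefore coincide on overlaps, and patch to a single global parallel section. Fix then $p\in F$, choose a chart in which $N$ is a coordinate slice, i.e. a product $B=B'\times D$ with $D\subseteq\mathbb{R}^{c}$ a ball centered at the origin and $N\cap B=B'\times\{0\}$, and trivialize $E$ over $B$ so that $\nabla=d+A$ for a matrix of smooth $1$-forms $A$. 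On $B\setminus N\subseteq U$ the given section satisfies $d\sigma=-A\sigma$.

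The heart of the argument is to produce a continuous extension of $\sigma$ across $N\cap B$. Work in a transverse disk $\{y\}\times D$ with polar coordinates $(r,\omega)\in(0,r_0]\times S^{c-1}$. Along each ray $r\mapsto(y,r\omega)$ the parallel condition gives the linear ODE $\partial_r\sigma+A_r\sigma=0$ with coefficient smooth up to $r=0$, so $\sigma$ extends smoothly along the ray to a limit $\sigma_0(y,\omega):=\lim_{r\to 0}\sigma(y,r\omega)$. It remains to see that this limit is independent of the direction $\omega$. Given $\omega_1,\omega_2\in S^{c-1}$, join them by a path in $S^{c-1}$ (connected because $c\ge 2$) and let $P^{(r)}_{\omega_1\to\omega_2}$ denote parallel transport along its image in the radius-$r$ sphere, an arc contained in $B\setminus N$; since $\sigma$ is parallel there, $\sigma(y,r\omega_2)=P^{(r)}_{\omega_1\to\omega_2}\,\sigma(y,r\omega_1)$. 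As $r\to 0$ the transport path has length $O(r)$ and $A$ is bounded near $N$, whence $P^{(r)}_{\omega_1\to\omega_2}\to\mathrm{Id}$; passing to the limit yields $\sigma_0(y,\omega_2)=\sigma_0(y,\omega_1)$. Thus $\sigma$ extends to a single-valued continuous section $\tilde\sigma$ on $B$.

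Finally I would upgrade $\tilde\sigma$ to a smooth parallel section. On $B\setminus N$ we have the continuous identities $\partial_i\tilde\sigma=-A_i\tilde\sigma$, whose right-hand sides extend continuously to all of $B$; since $c\ge 2$, generic lines parallel to a coordinate axis avoid $N$, so the fundamental theorem of calculus along such lines together with continuity of both sides forces $\tilde\sigma$ to be $C^1$ on $B$ with $\partial_i\tilde\sigma=-A_i\tilde\sigma$ everywhere. A standard bootstrap then gives $\tilde\sigma\in C^\infty$ and $\nabla\tilde\sigma=0$ on $B$, and patching the local extensions as above completes the proof. I expect the main obstacle to be the direction-independence of the radial limit in the codimension-two case, where $S^{c-1}$ is merely connected and the loop encircling $N$ carries a priori nontrivial holonomy; the shrinking-arc estimate $P^{(r)}\to\mathrm{Id}$ is what makes the limit well defined, and it is precisely here, as in the removable-singularity step, that the hypothesis $c\ge 2$ is used.
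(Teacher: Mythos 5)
Your proof is correct, but it takes a genuinely different route from the paper's, and the comparison is instructive. The paper avoids all analysis at the singular set by a recentering trick: for $p \in F$ it chooses a nearby regular point $q \in U$ inside the slice chart (displaced from $p$ along the first coordinate) and a ball $B_q$ containing $p$, and defines $\widetilde\sigma$ on $B_q$ by parallel transport of $\sigma(q)$ along the radial lines through $q$. Such a section is automatically smooth on all of $B_q$ (smooth dependence of linear ODE solutions on data), so no limit, continuity, or bootstrap argument is needed; because $\mathrm{codim}\, S \geq 2$, every radial line from $q$ that meets $S$ lies in the hyperplane $\{x_2=0\}$ of the chart, so $\widetilde\sigma = \sigma$ along a dense family of rays, hence $\widetilde\sigma = \sigma$ on $B_q \cap U$ by continuity, and $\nabla\widetilde\sigma = 0$ on the dense set $B_q \cap U$, hence on all of $B_q$. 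You instead prove a removable-singularity statement at $p$ itself: radial limits exist (correctly exploiting that the connection form $A$, unlike $\sigma$, is smooth across $N$), they are direction-independent by the shrinking-arc holonomy estimate, and the continuous extension is upgraded to a smooth parallel one by integrating $\partial_i\tilde\sigma = -A_i\tilde\sigma$ along generic lines missing $N$ --- this last step being a miniature of the paper's own density argument. Your route is longer but isolates exactly where $\mathrm{codim} \geq 2$ enters (connectedness of the normal spheres $S^{c-1}$ and triviality of the holonomy of shrinking loops around $N$) and yields the intermediate statement that $\sigma$ extends continuously; the paper's route buys brevity, since smoothness of the candidate extension is free by construction.

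One step you should tighten: existence and agreement of the radial limits $\sigma_0(y,\omega)$ does not by itself imply that $\tilde\sigma$ is continuous at points of $N$ (a function can have all radial limits at a point equal and still be discontinuous there). The gap is easy to fill in your setting: the solution $\hat\sigma(y,r,\omega)$ of the radial ODE with initial value $\sigma(y,r_0\omega)$ at $r=r_0$ depends smoothly --- in particular uniformly continuously on compacta --- on $(y,r,\omega) \in B' \times [0,r_0] \times S^{c-1}$ jointly, and equals $\sigma(y,r\omega)$ for $r>0$; combining this uniformity with compactness of $S^{c-1}$ and your direction-independence gives continuity of $\tilde\sigma$ at $N$. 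With that addition your argument is complete.
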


\noindent
As an application of this result we give a simple solution to a problem discussed by R. Bryant in MathOverflow \cite{BryantOver} concerning the existence of a Killing vector field defined on a compact Riemann surface $M$ without a finite number of points  (see Remark \ref{rem:SurSeg}).

\smallskip\noindent
In Section \ref{section:RR+} we introduce the conditions $\mathcal{R}$ and $\mathcal{R}^+$ for an open subset $U \subset \mathbb{R}^2$. We prove that under condition $\mathcal{R}$ (resp. $\mathcal{R}^+$) a $\nabla$-parallel section $\sigma:  U \rightarrow E$ of a vector bundle $E\to\mathbb{R}^2$, where $\nabla$ is a metric (resp. general) connection, can be extended on the whole $\mathbb{R}^2$. More precisely, here is our second result.

\begin{thm}\label{piano}
Let $\pi:E\to \mathbb{R}^2$ be a vector bundle endowed with a linear connection $\nabla$.
Let $\sigma :  U \rightarrow E$ be a $\nabla$-parallel section defined on the open and dense subset $U \subset \mathbb{R}^2$.
Then $\sigma$ can be extended on the whole $\mathbb{R}^2$ as a $\nabla$-parallel section if at least one of the following conditions holds:
\begin{enumerate}
\item[(i)]  there exists a $\nabla$-parallel metric $g$ on $E$ and the domain $U$ of $\sigma$ satisfies condition $\mathcal{R}$;
\item[(ii)]  the domain $U$ of $\sigma$ satisfies condition $\mathcal{R}^+$.
\end{enumerate}
\end{thm}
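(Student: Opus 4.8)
The plan is to build the extension $\bar\sigma$ of $\sigma$ to all of $\mathbb{R}^2$ by continuity across $F:=\mathbb{R}^2\setminus U$, and then to check that the result is smooth and $\nabla$-parallel. The underlying tool is parallel transport: since $\nabla$ is a smooth connection on the \emph{whole} bundle $E\to\mathbb{R}^2$, for any piecewise-smooth path $\gamma$ from a fixed base point $q_0\in U$ to any $p\in\mathbb{R}^2$ the transport $P_\gamma\colon E_{q_0}\to E_p$ is well defined, even when $p\in F$, because it only uses the connection along $\gamma$. Thus a candidate value $P_\gamma(\sigma(q_0))$ always exists at every $p$; the whole issue is its independence of $\gamma$ and its regularity. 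I also record at the outset the one analytic fact that distinguishes the two cases: in case (i) we have $\nabla g=0$ and $\nabla\sigma=0$ on $U$, whence $d\,g(\sigma,\sigma)=2\,g(\nabla\sigma,\sigma)=0$, so $g(\sigma,\sigma)$ is constant and $\sigma$ stays in a fixed sphere bundle as one approaches $F$. This boundedness is exactly the input that in case (ii) must instead be supplied by the stronger hypothesis.

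The heart of the argument is crossing $F$. I would stratify $F$ into its $1$-dimensional part (the interiors of the smooth arcs produced by the definition of condition $\mathcal{R}$, resp.\ $\mathcal{R}^+$) and its $0$-dimensional part (endpoints and mutual intersections of those arcs). Near an interior point $a$ of an arc, a small disk $D$ meets $U$ in two sides $D^+,D^-$, and I must produce the two one-sided limits $\sigma^\pm(a)=\lim_{D^\pm\ni x\to a}\sigma(x)$ and show they coincide. Existence of the limits holds in case (i) by the constant-norm remark above, and in case (ii) because condition $\mathcal{R}^+$ is tailored precisely to force this convergence. For the matching $\sigma^+(a)=\sigma^-(a)$ I would compare the two sides by parallel transport along a path joining $D^+$ to $D^-$ \emph{inside} $U$, a path whose existence is exactly what the connectedness built into $\mathcal{R}$ (resp.\ $\mathcal{R}^+$) guarantees: since $\sigma$ is a genuine single-valued $\nabla$-parallel section on $U$, following it around any loop contained in $U$ returns it to itself, so the two one-sided values are identified by that transport and the limits agree. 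Hence $\bar\sigma$ extends continuously across the entire $1$-dimensional stratum.

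It remains to deal with the $0$-dimensional leftover and with regularity. The endpoints and crossings form a set contained in a submanifold of codimension $2$, so once $\bar\sigma$ is defined and parallel on the complement of that set, Theorem~\ref{higherCodimension} extends it across the remaining points as a parallel section. For smoothness, work in a local trivialization $\nabla=d+A$ with $A$ smooth: on the dense open set $U$ the extension satisfies $d\bar\sigma=-A\bar\sigma$, whose right-hand side is continuous on all of $\mathbb{R}^2$ because $\bar\sigma$ is continuous and $A$ is smooth. A standard removable-singularity/bootstrap argument for this linear first-order system then upgrades $\bar\sigma$ to a smooth section for which $d\bar\sigma=-A\bar\sigma$ holds everywhere; equivalently $\nabla\bar\sigma$ is continuous and vanishes on the dense set $U$, hence vanishes identically, so $\bar\sigma$ is $\nabla$-parallel.

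The step I expect to be the main obstacle is the existence and matching of the one-sided limits across the codimension-$1$ part of $F$. In the metric setting the constancy of $|\sigma|_g$ delivers boundedness for free, which is why the weaker condition $\mathcal{R}$ already suffices; for a general connection, however, $\sigma$ could a priori degenerate or blow up as one approaches $F$, and it is precisely the strengthening to $\mathcal{R}^+$ that both rules this out and provides, at each arc, the comparison path in $U$ needed to identify the two sides. Everything else—path-independence, the codimension-$2$ reduction via Theorem~\ref{higherCodimension}, and the ODE regularity—is then routine.
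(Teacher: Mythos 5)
Your proposal has genuine gaps, and the most important one is that it never introduces the single tool that makes conditions $\mathcal{R}$ and $\mathcal{R}^+$ usable: a quantitative holonomy estimate comparing parallel transport along two homotopic curves in terms of curvature and swept area, of the form $\|\tau_{\gamma_0}(\xi_p)-\tau_{\gamma_1}(\xi_p)\|_g \leq \|\xi_p\|_g\,\mathrm{R}\,e^{\mathrm{G}\mathrm{L}}\,\mu(S)$ (the paper's Proposition \ref{prop.estimate}). Without it, both of your central claims are unsupported. First, constancy of $g(\sigma,\sigma)$ in case (i) gives boundedness, but boundedness does not give existence of one-sided limits: a section of constant norm can oscillate as it approaches $F$, and ruling this out is exactly what the curvature--area bound combined with the small-area bypasses of condition $\mathcal{R}$ accomplishes. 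Second, the matching $\sigma^+(a)=\sigma^-(a)$ does not follow from ``single-valuedness of $\sigma$ on $U$'': comparing the two sides through a path $\gamma\subset U$ identifies $\sigma$-values along $\gamma$, but relating that to the two limits at $a$ introduces precisely the holonomy of the loop formed by $\gamma$ and a short segment crossing $F$ at $a$, and showing this holonomy is negligible is again the content of the estimate plus the smallness of the enclosed areas; asserting it from single-valuedness is circular. There is also a misreading of the hypotheses: the curves $\gamma_i$ in Definitions \ref{conditionR} and \ref{conditionR+} lie in $U$, not in $F$, so $F$ carries no ``smooth arcs plus isolated points'' stratification --- it is an arbitrary closed set with dense complement --- and consequently your whole architecture of one-sided limits across arcs, together with the codimension-$2$ cleanup via Theorem \ref{higherCodimension}, has nothing to attach to.

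The final regularity step rests on a false principle as well: ``continuous on $\mathbb{R}^2$ and parallel on a dense open set'' does not imply parallel. With $A=0$ on a trivial line bundle, parallel means constant, and the Cantor function is continuous with vanishing derivative on a dense open subset of $\mathbb{R}$ without being constant; its two-dimensional version $\bar\sigma(x,y)=f(x)$ is parallel on the dense open set $(\mathbb{R}\setminus C)\times\mathbb{R}$ but not on $\mathbb{R}^2$ (of course that $U$ violates condition $\mathcal{R}$, which shows the conditions must enter this step too --- it cannot be ``routine''). The paper avoids all three problems by reversing your architecture: it first constructs a globally \emph{smooth} candidate $\xi$ by parallel transport along the radial lines from the point $p_0$ of condition $\mathcal{R}$ (resp.\ $\mathcal{R}^+$), so that $\nabla\xi$ is continuous by construction; it then uses Proposition \ref{prop.estimate}, telescoped along the segment $\overline{p_0p}$ decomposed as in the conditions, to show $\|\xi(p)-\sigma(p)\|_g\leq \const\cdot\epsilon$ for every $p$ in the dense set $V$, whence $\xi\equiv\sigma$ on $U$; finally $\nabla\xi$ is continuous and vanishes on the dense set $U$, hence identically. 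Building the smooth global candidate first and proving it coincides with $\sigma$ is what dissolves the limit-existence, matching, and regularity issues simultaneously; your plan would need to reconstruct essentially all of this machinery to close its gaps.
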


\smallskip\noindent
In section \ref{section:Compact} we show that the condition $\mathcal{R}$ holds when the complement $\mathbb{R}^2 \setminus U$ is compact and has zero Lebesgue measure.

\noindent
As an interesting application of the result contained in the item $(ii)$ of Theorem \ref{piano}, we can prove that a Killing vector field defined on a Riemann surface minus a segment can be always extended on the whole Riemann surface.
Here by a segment we mean a segment in some coordinate system.
Such extension can be also obtained by using a radial extension  (see Remark \ref{rem:SurSegI}).

\begin{rem}
By the uniformization theorem, a $2$-dimensional simply connected manifold $M^2$ is either the plane $\mathbb{R}^2$ or the
sphere $S^2$. To recover our extension theorem in the case of a vector bundle $E\to\mathbb{S}^2$ we can remove a point of $U$ and restrict
the vector bundle to $\mathbb{R}^2$.
\end{rem}

\noindent
In Section \ref{sec.killing.conformal} we see that, even though the obtained results can be used for extending projective (in particular Killing, affine, homothetic) vector fields, they cannot apply to the class of conformal vector fields. The point here is that such vector fields, in the $2$-dimensional case, are not parallel sections of a vector bundle endowed with a linear connection.

\smallskip\noindent
Finally, in Section \ref{section:Kostant} we study the extendibility of Killing vector fields by using the Kostant connection.

\smallskip\noindent
It is worth mentioning that the results of the present paper have been used in \cite{GPW} in the context of tractor connections.

\section{Why the hypotheses of connectedness of Problem \ref{problem} are essential?}\label{sec.counter}

The examples below show that the hypotheses of connectedness of $U$ and of simply-connectedness of $M$ in Problem \ref{problem} are essential. To start with, we show why the hypothesis on the connectedness of $U$ cannot be removed.

\begin{Ex} \label{sconesso}
Let $\mathbb{R}^2$ be the standard Euclidean space with coordinates $(x,y)$. Let $X$ be the Killing vector field (that we recall is a particular parallel section of the connection $\widetilde{\nabla}$ defined by \eqref{eq.conn.Kostant})  defined on $U=\mathbb{R}^2\setminus\{(x,y)\,|\,y=0\}$ as follows: in the half-plane $y>0$, $X$ is the right translation and in the half-plane $y<0$ is the left translation (see Figure \ref{fig.translation}). It is obvious that $X$ cannot be extended on the whole $\mathbb{R}^2$.
\begin{figure}[h!]
\begin{center}
\includegraphics[width=0.7\textwidth]{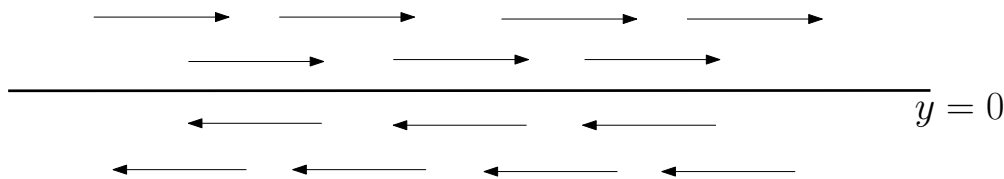}
\end{center}
\caption{In general, a Killing vector field defined on a disconnected open dense subset is not extendable to its closure.}\label{fig.translation}
\end{figure}
\end{Ex}

\noindent
Now we list several examples regarding the hypothesis on $M$ of being simply connected.

\begin{Ex} \label{S1}
Let $E := \mathbb{S}^1 \times \R$ be the trivial vector bundle over the circle $\mathbb{S}^1$ and $\mathbf{e}:p\in\mathbb{S}^1 \to (p,1)$ a section of $E$.
The non-exact $1$-form $\mathrm{d} \theta$ on $\mathbb{S}^1$, where $\theta$ is the angle coordinate, gives rise to a connection $\nabla$ in $E$, i.e., the derivative of the section $\mathbf{e}$ is given by
\[
\nabla \mathbf{e} := \mathrm{d} \theta \otimes \mathbf{e} \, .
\]
Let $p\in\mathbb{S}^1$. The complement $U:=\mathbb{S}^1\setminus \{p\}$ is an interval and by parallel transport there exists a non-zero $\nabla$-parallel section $\sigma: U \rightarrow E$. Since the equation $\frac{df}{d\theta} + f = 0$ has no non-zero periodic solutions, $\sigma$ cannot be extended on $\mathbb{S}^1$.
\end{Ex}

\begin{Ex} \label{nastro}
Let $\Sigma \subset \mathbb{R}^3$ be a Moebius strip in the Euclidean space, i.e., the standard example of non-orientable surface. Let $\nu(\Sigma)$ be its normal bundle endowed with the normal connection $\nabla^{\perp}$. As it is well-known, by removing the central circle $\gamma$ of $\Sigma$ we get a cylinder, which is connected and orientable open subset $U:= \Sigma \setminus \gamma$. So, the restriction to $U$ of normal bundle $\nu(\Sigma)$ admits a $\nabla^{\perp}$-parallel section $\sigma$. Since the Moebius strip is not orientable, the section $\sigma$ cannot be extended to the whole $\Sigma$.
\end{Ex}

\noindent
Another way of constructing examples is that to use a flat connection in a vector bundle over a compact Riemann surfaces $\Sigma$ of genus $\geq 1$.
Indeed, as it is well-known (see e.g. \cite[page 559]{AB83}), an irreducible representation $\rho$ of $\pi_1(\Sigma)$ gives rise to a flat connection $\gamma$ whose holonomy group is $\rho(\pi_1(\Sigma))$. Thus, such connection has no global parallel sections. On the other hand a compact Riemann surface
is obtained from a polygon with edges glued together in pairs. The interior $U$ of such polygon is simply connected so that the restriction of $\gamma$ to $U$ gives a flat bundle over $U$, implying the existence of a globally defined $\nabla$-parallel section $\sigma$ on $U$.
As explained above, the section $\sigma$ cannot be extended to the whole $\Sigma$.

\section{The conditions $\mathcal{R}$ and $\mathcal{R}^+$}\label{section:RR+}

Here is the definition of condition $\mathcal{R}$.

\begin{Def}\label{conditionR}
An open subset $U \subset \mathbb{R}^2$ satisfies condition $\mathcal{R}$ if there exists
a point $p_0 \in U$ and a dense subset $V \subset U$ such that for any $p \in V$ there is a compact subset $K_p$ containing the segment $\overline{p_0p}$
such that for any $\epsilon > 0$  there exist disjoint subsegments $I_i\subset\overline{p_0p}$ and
%for each segment $I_j$, $j=1,\cdots,n$ there exist a continuous
piecewise smooth curves $\gamma_i \subset U$, $i=1,\dots,n$ with the following properties:
$\overline{p_0p}\setminus \bigcup_i I_i\subset U$ and the concatenation of $I_i$ and $\gamma_i$ forms, for each $i\in\{1,\dots,n\}$, a continuous piecewise smooth Jordan curve bounding a region $S_i$ contained in the compact subset $K_p$ such that
\[
\sum_{i=1}^{n} \mu(S_i) \leq \epsilon
\]
where $\mu$ is the Lebesgue measure.
\end{Def}

\begin{figure}[h!]
\begin{center}
\includegraphics[scale=0.5]{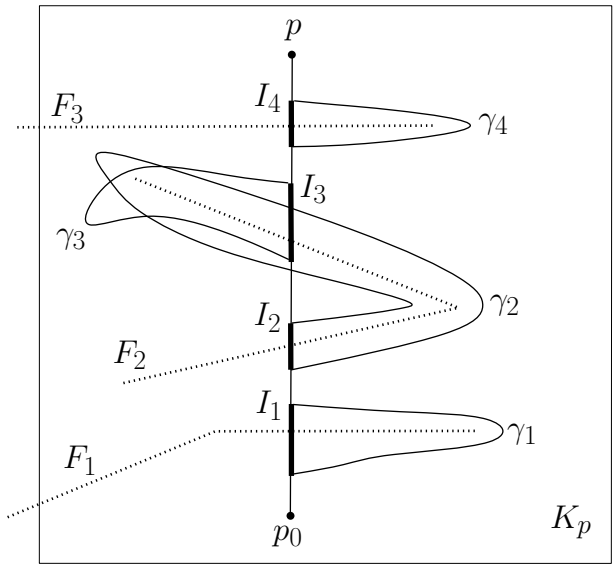}
\end{center}
\caption{Example of condition $\mathcal{R}$ for the complement $U$ of the closed set $F = F_1 \cup F_2 \cdots \cup F_4$. }\label{fig.conditionR}
\end{figure}

\newpage

\noindent
Here is the definition of condition $\mathcal{R}^+$.
\begin{Def}\label{conditionR+}
%Let $U \subset \mathbb{R}^2$ be an open subset of the plane.
%The subset $U$ satisfies condition $\mathcal{R}^+$ if there exists
%a point $p_0 \in U$ and a dense subset $V \subset U$ such that:
%For all $p \in V$ there is a compact subset $K_p$ containing the segment $\overline{p_0p}$
%such that for all $\epsilon ,  \mathrm{G}> 0 $ there exist points $p_1,p_2,\cdots,p_n = p$ of the intersection $U \cap \overline{p_0p}$
%and continuous piecewise smooth curves $\gamma_i \subset U$ from $p_i$ to $p_{i+1}$ such that the concatenation of
%$\gamma_i$ and the segment $\overline{p_{i+1}p_i}$ forms a Jordan curve bounding a region $S_i$ contained in the compact subset $K_p$ such that
%\[
%e^{\frac{\mathrm{G}}{2}\mathrm{L}_{\gamma}}\sum_{i=0}^{n-1} e^{\mathrm{G} \mathrm{L}_i } \mu(S_i) < \epsilon
%\]
%where $\mu$ is the Lebesgue measure, $\mathrm{L}_{\gamma} $ is the sum of the length of the curves $\gamma_i$ and
%$\mathrm{L}_i$ is the maximal length of the curves of a homotopy of the region $S_i$ relative to points $p_i,p_{i+1}$ deforming $\gamma_i$ to the interval $\overline{p_{i}p_{i+1}}$.
%
%\bigskip
An open subset $U \subset \mathbb{R}^2$ satisfies condition $\mathcal{R}^+$ if there exists
a point $p_0 \in U$ and a dense subset $V \subset U$ such that for any $p \in V$ there is a compact subset $K_p$ containing the segment $\overline{p_0p}$
such that for any $\epsilon,G > 0$  there exist disjoint subsegments $J_i\subset\overline{p_0p}$ and
%for each segment $I_j$, $j=1,\cdots,n$ there exist a continuous
piecewise smooth curves $\gamma_i \subset U$, $i=1,\dots,n$ with the following properties:
$\overline{p_0p}\setminus \bigcup_i J_i\subset U$ and the concatenation of $J_i$ and $\gamma_i$ forms, for each $i\in\{1,\dots,n\}$, a continuous piecewise smooth Jordan curve bounding a region $S_i$ contained in the compact subset $K_p$ such that
\[
e^{\frac{\mathrm{G}}{2}\mathrm{L}_{\gamma}}\sum_{i=0}^{n-1} e^{\mathrm{G} \mathrm{L}_i } \mu(S_i) < \epsilon
\]
where $\mu$ is the Lebesgue measure, $\mathrm{L}_{\gamma} $ is the sum of the length of the curves $\gamma_i$ and
$\mathrm{L}_i$ is the maximal length of the curves of a homotopy of the region $S_i$ relative to the endpoints of $J_i$ deforming $\gamma_i$ to the interval $J_i$.
\end{Def}

\subsection{Example: the property $\mathcal{R}^+$ is satisfied by the complement of a segment}
\label{sotto:segmento}

Here we show that the open subset $U := \mathbb{R}^2 \setminus I$, where $I$ is a segment, satisfies the property $\mathcal{R}^+$.
Without loss of generality we can assume that $I$ is the interval $[0,1]$ in the $x$-axis i.e. $I = \{ (x,0) :  0 \leq x \leq 1 \}$.
Let $p_0 = (\frac{1}{2},\frac{1}{2}) \in U$.
Set $V=U$ and let $p=(p_x,p_y) \in V=U$. Notice that if the segment $\overline{p_0p}$ is disjoint from $I$ then property $\mathcal{R}^+$ holds trivially.
So assume that $\overline{p_0p} \cap I$ is not empty.
Let $K_p \subset \mathbb{R}^2$ be a closed ball centered
at $(0,0)$ whose interior contains both the interval $I$ and the point $p$.
Observe that $p_0$ is an interior point of $K_p$.
Let $G$ and $\epsilon$ be as in the definition of property $\mathcal{R}^+$.
Let $T_{\delta}$ be the set of points at distance $\leq \delta$ from $I$. Namely,
\[
T_{\delta} = \{ q \in \mathbb{R}^2 : \mathrm{dist}(q,I) \leq \delta \}
\]
Fix $\delta<\min\{\frac{1}{2},p_y\}$ small enough such that $T_{\delta}$ is contained in the interior of $K_p$.
Let $p_1,p_2$ be the two points of the intersection of the segment $\overline{p_0p}$ with the boundary $\partial T_{\delta}$ of the set $T_{\delta}$ (being $p_1$ closest to $p_0$). Set $p_3 = p$ and $\gamma_0 = \overline{p_0p_1}$. The curve $\gamma_1$ is one of the two connected components of $\partial T_{\delta}\setminus\{p_1,p_2\}$. Define $\gamma_2$ as $\gamma_2 := \overline{p_2p_3}$.
Now $\mu(S_0) = \mu(S_2) = 0$ and \[ \mu(S_1) \leq \mu(T_{\delta}) = 2 \delta + \pi \delta^2 \, ;\]
\[
L_{\gamma} \leq \mathrm{dist}(p,p_0) + \mathrm{perimeter}(T_{\delta})  = \mathrm{dist}(p,p_0) + 2 + 2\pi \delta.
\]
Set $h(x,s) := x (s p_2 + (1-s)p_1 ) + (1-x) \gamma_1(s) $ where $\gamma_1(s)$ is a parametrization of $\gamma_1$ from $p_1$ to $p_2$ where $(x,s) \in [0,1] \times [0,1]$. Then $h$ is a homotopy as in property $\mathcal{R}^{+}$ deforming $\gamma_1$ into the interval $\overline{p_1p_2}$. Moreover since $S_1$ is convex we see that the curves $h(x, \cdot)$ of the homotopy are always contained in $S_1$. The length of the curves $h(x, \cdot)$ are always bounded by the length of $\gamma_1$. Then
\[
e^{\frac{G}{2} L_{\gamma}}  e^{G L_1} \mu(S_1) \leq e^{\frac{G}{2} (\mathrm{dist}(p,p_0) + 2 + 2 \pi \delta)}
 e^{G (2 + 2 \pi \delta)} (2 \delta + \pi \delta^2) \, .
 \]
Now is clear that for $\epsilon > 0$ there exists a $\delta > 0$ such that
\[
e^{\frac{G}{2} L_{\gamma}}  e^{G L_1} \mu(S_1) \leq e^{\frac{G}{2} (\mathrm{dist}(p,p_0) + 2 + 2 \pi \delta)}
 e^{G (2 + 2 \pi \delta)} (2 \delta + \pi \delta^2) < \epsilon \, .
 \]
This shows that the complement of a segment $I$ has the property $\mathcal{R}^{+}$.

\subsection{The case when $\mathbb{R}^2 \setminus U $ is compact and has zero Lebesgue measure}\label{section:Compact}

Here we prove the following proposition.

\begin{prop} Let $U \subset \mathbb{R}^2$ be an open and connected subset whose complement $F=\mathbb{R}^2 \setminus U$ has zero measure and is compact. Then $U$ satisfies property $\mathcal{R}$.
\end{prop}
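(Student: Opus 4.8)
The plan is to verify condition $\mathcal{R}$ with the choices $V=U$ and $p_0$ any fixed point of $U$, and, for each $p\in U$, with $K_p$ a fixed large closed disc containing $\overline{p_0p}$, the set $F$, and the $1$-neighbourhood $\{q:\mathrm{dist}(q,F)\le 1\}$. The heart of the matter is, given $\epsilon>0$, to surround $F$ by an open set $W$ of small area through which the segment $\overline{p_0p}$ can be cheaply rerouted. So first I would reduce everything to producing, for every $\epsilon>0$, a bounded open set $W$ with $F\subset W$, $\mu(W)<\epsilon$, $\overline{W}\subset K_p$ and $p_0,p\notin\overline{W}$, whose boundary is a finite union of piecewise-smooth arcs contained in $U$ and which has no holes, i.e. $\mathbb{R}^2\setminus W$ is connected.

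To build $W$ I would use all three hypotheses on $F$. Since $F$ is compact and $\mu(F)=0$, finitely many open discs of total area $<\epsilon/2$, all contained in a prescribed small neighbourhood of $F$ (so as to miss $p_0,p$ and stay inside $K_p$), cover $F$; call their union $W_0$. In general $W_0$ has bounded complementary components (``holes''), and I would take $W$ to be $W_0$ together with all of these. Here is the one place where connectedness of $U$ is indispensable: I claim the filled-in area is again small. Indeed, writing $W_0^{(r)}$ for such a cover built inside $\{\mathrm{dist}(\cdot,F)<r\}$ and $\widehat{W}_0^{(r)}$ for its filling, any $x\notin F$ lies in the unbounded component of $U=\mathbb{R}^2\setminus F$ (the only one, by connectedness), hence is joined to infinity by a path in $U$ at positive distance from $F$; that path avoids $W_0^{(r)}$ for $r$ small, so $x$ eventually leaves the fillings. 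Thus $\limsup_{r\to0}\widehat{W}_0^{(r)}\subset F$, and since all fillings sit in the fixed compact $K_p$, the reverse Fatou lemma gives $\limsup_{r\to0}\mu(\widehat{W}_0^{(r)})\le\mu(F)=0$. Choosing $r$ small enough yields the desired $W$, whose boundary lies on finitely many bounding circles, hence in $U$, and which the line carrying $\overline{p_0p}$ meets in finitely many points. Consequently $\overline{p_0p}\cap W$ is a finite union of disjoint closed subsegments $I_1,\dots,I_n$ with endpoints on $\partial W\subset U$, and $\overline{p_0p}\setminus\bigcup_i I_i\subset U$.

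It remains to detour each chord $I_i$ within $U$ so as to bound a small region. Placing coordinates so that $\overline{p_0p}$ lies on the $x$-axis, I would route every detour through the upper half-plane: for each $i$ take $\gamma_i$ to be an arc of $\partial W$ joining the endpoints of $I_i$ so that the Jordan curve $I_i\cup\gamma_i$ bounds a component $S_i$ of $W\cap\{y>0\}$ incident to $I_i$ from above. Because $W$ has no holes, each such $S_i$ is a genuine subdomain of $W$ contained in $W\cap\{y>0\}$; since distinct components of the open set $W\cap\{y>0\}$ are disjoint, the $S_i$ would be pairwise disjoint and $\sum_i\mu(S_i)\le\mu(W\cap\{y>0\})\le\mu(W)<\epsilon$, with every $S_i\subset\overline{W}\subset K_p$. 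This is exactly condition $\mathcal{R}$.

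The step I expect to be most delicate is this last one: matching chords to boundary arcs so that each $I_i\cup\gamma_i$ is a single Jordan curve bounding exactly one component of the upper part of $W$. The difficulty is that a component of $W\cap\{y>0\}$ may touch the $x$-axis in several chords at once (an ``arch'' of $W$ spanning a gap of $\overline{p_0p}$ that itself lies in $U$), so that naively its boundary is not of the form $I_i\cup\gamma_i$. Resolving this—by subdividing such a component along the chords, or by assigning to each chord the minimal enclosed subdomain and checking disjointness directly—is the real content of the argument. Hole-freeness of $W$, which is precisely what connectedness of $U$ buys, is what guarantees these subdomains remain inside $W$ and do not swallow a trapped region of fixed positive area (as would occur, for instance, if $F$ were a circle and $U$ disconnected, the very configuration that must be excluded).
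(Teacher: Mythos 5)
Your handling of the holes is correct, and it is a genuinely different route from the paper's. Where the paper disposes of a non--simply-connected component of its disc cover $\mathbf{D}$ by ``cutting the circles of the bouquet'' (a one-sentence appeal to connectedness of $U$), you instead fill the bounded complementary components and control the added area by building the cover inside the $r$-neighbourhood of $F$ and letting $r\to 0$: connectedness of $U$ joins each point of $U$ to infinity at positive distance from $F$, so the fillings shrink to $F$, and reverse Fatou (legitimate, since all fillings sit in the fixed compact $K_p$) forces their measure to $0$. That argument is sound, and is in fact carried out in more detail than the corresponding step of the paper. The identification of the chords $I_i$ as closures of the components of $\overline{p_0p}\cap W$, with endpoints on $\partial W\subset U$, is also fine.

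The last step, however, is a genuine gap and not merely a delicate point: the upper-half-plane routing you propose cannot be repaired. Take for $F$ an arc crossing the segment transversally at two points $f_1<f_2$ (so $F$ is compact, of measure zero, with connected complement). Since $F$ is connected, it lies in a single component of any cover $W$, which for small $\mu(W)$ is a thin arch with two feet, giving two chords $I_1,I_2$ separated by a gap of $\overline{p_0p}$ lying in $U$. Now the two endpoints of the foot-chord $I_1$ lie on opposite sides of the arch inside the closed upper half-plane: one is to the left of $f_1$, the other lies on the axis under the arch, inside the Jordan curve formed by the arch of $F$ and the axis interval $[f_1,f_2]$. Hence \emph{every} curve in $\{y\ge 0\}$ joining the endpoints of $I_1$ must cross $F$ --- no admissible $\gamma_1$ through the upper half-plane exists, whether taken on $\partial W$ or not, so neither of your two suggested repairs can succeed; and if instead you merge $I_1$, the gap and $I_2$ into one chord and detour over the top of the arch, the bounded region contains the portion of $U$ trapped under the arch, whose area is independent of $\mu(W)$, destroying the bound $\sum_i\mu(S_i)\le\epsilon$. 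The missing idea --- and it is how the paper proceeds --- is to let each chord choose its own side: each $I_i$ is a crosscut of the simply connected component of $W$ containing it (your filling step sets up exactly this situation), and $\gamma_i$ must be chosen as an arc of that component's boundary so that the concatenation $I_i\sharp\gamma_i$ bounds a region contained in $W$ itself; for the arch this means going around each foot's tip \emph{below} the axis. The paper implements this by walking along $\overline{p_0p}$ and, at each first entry point into a component of $\mathbf{D}$, following that component's boundary back to the segment so that the enclosed region $S_i$ lies inside $\mathbf{D}$, whence $\sum_i\mu(S_i)\le\mu(\mathbf{D})\le\epsilon$. By constraining all detours to $\{y>0\}$ you discard precisely this freedom, and the area estimate fails.
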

\begin{proof}
Let us fix a  point $p_0\in U$.  Let $p \in U$ be any other point. Since $F$ is compact there exists a disc $K_p$ containing $F$ and the segment $\overline{p_0p}$ in its interior. Let $\epsilon$ be small enough such that $F$ can be covered with an union $\mathbf{D}$ of discs whose measure $\mu(\mathbf{D})$ is smaller than $\epsilon$ and contained in $K_p$. Since
$F$ is compact, we can assume that $\mathbf{D}$ is a finite union of discs. The set $\mathbf{D}$ has a finite number of connected components.
Assume now that the connected components of $\mathbf{D}$ intersecting the segment $\overline{p_0p}$ are simply connected.
By starting at $p_0$ and moving along the segment $\overline{p_0p}$ we will meet a first point $a_1$ belonging to the boundary of one of the components of $\mathbf{D}$. In the case when $a_1$ belongs to several connected components we just select one of them and call it $\mathbf{D}_1$.
Since $\mathbf{D}_1$ is simply connected, by following its boundary, we will meet the segment $\overline{p_0p}$ at a point $b_1 \in \overline{p_0p}$ such that both
the interval $I_1 = \overline{a_1 b_1}$ and the boundary curve $\gamma_1$ from $a_1$ to $b_1$ are as in condition $\mathcal{R}$, i.e. their concatenation $I_1 \sharp \gamma_1$ is a continuous piecewise smooth Jordan curve bounding a region $S_1$ contained in $\mathbf{D}_1$. Now, by starting at the point $b_1$ and moving towards $p$ along $\overline{p_0p}$, we will meet another point $a_2$ of the boundary of one of the connected components of $\mathbf{D}$. As we did for $a_1$ we can do for $a_2$. Namely, by following the boundary of the respective connected component, we will get another point $b_2 \in \overline{p_0p}$ such that both the interval $I_2 = \overline{a_2 b_2}$ and the boundary curve $\gamma_2$ from $a_2$ to $b_2$ are as in condition $\mathcal{R}$, i.e. their concatenation $I_2 \sharp \gamma_2$ is a continuous piecewise smooth Jordan curve bounding a region $S_2$ contained in the respective connected component.
Then, by starting at $b_2$, we can repeat the above argument to construct a finite sequence of intervals $I_i$ and boundary curves $\gamma_i$, $i=1,\cdots,N$ as in condition $\mathcal{R}$. Since all the regions $S_i$ are included in $\mathbf{D}$, we get that \[  \sum_{i=1}^{N} \mu(S_i) \leq \mu(\mathbf{D}) \leq \epsilon \]
showing that, under the hypothesis that all the connected components of  $\mathbf{D}$ intersecting the segment $\overline{p_0p}$ are simply connected, condition $\mathcal{R}$ holds.

\smallskip\noindent
Now assume that a connected component $\mathbf{A}$ of $\mathbf{D}$ is not simply connected. The homotopy type of $\mathbf{A}$ is that of a bouquet of a finite number of circles as $\mathbf{A}$ is a finite union of discs. Since the set $U$ is connected, we can cut each one of the circles of the bouquet $\mathbf{A}$ so that to obtain a new set $\mathbf{D}$ which covers $F$ and having all connected components also simply connected.
Then we can apply the previous argument. This completes the proof.
\end{proof}

\begin{rem}
Notice that the hypothesis of compactness on $F$ is just used to find a simply connected compact set $K_p$ containing all the connected components of the intersection of $F$ and the segment $\overline{p_0p}$ in its interior. So the above proof applies also to non-compact subsets $F$ whose all connected components are compact.
\end{rem}

\section{Proof of Theorem \ref{higherCodimension}}

Here is the proof of Theorem \ref{higherCodimension}.
\begin{proof}
Since the domain of $\sigma$ is assumed to be dense it is enough to show that $\sigma$ can be extended around any point of $F$.
So let $p \in F$ be a point where $\sigma$ is not defined. Let $S \subset M$ be the submanifold of codimension $\geq 2$ which contains $F$. Then near $p$ we can find a coordinate system $(x_1,\cdots,x_m)$  centered at $p$ of $M$ such that $S$ is locally described by system $\{x_1=x_2=\cdots=x_{m-s}=0\}$, where $s=\mathrm{dim}(S)$.
Let $q \in M$ be a point whose coordinates $x_1,\cdots,x_m$ are $(\epsilon, 0, 0, \cdots, 0)$ with $\epsilon$ small enough such that an open ball $B_q$ center at $q$ is contained in the coordinate system $x_1,\cdots,x_m$ and $p \in B_q$.
Observe that $q$ belongs to the open subset $U$. Consider the smooth section $\widetilde{\sigma}$ defined on $B_q$ by parallel transporting $\widetilde{\sigma}(q) := \sigma(q)$
along the radial lines through the point $q$. If $\mathcal{L}$ is a radial line through the point $q$ which does not intersect $S$, then we have
\[
\widetilde{\sigma}|_{\mathcal{L}} = \sigma|_{\mathcal{L}} \, .
\]
Observe that the subset $G \subset B_q$ of points $x \in B_q$ such that the radial line $\overline{xq}$ does not intersect $S$ is dense in $B_q$. Indeed, the radial lines through $q$ which intersect $S$ are contained in the intersection of $B_q$ with the hyperplane $x_2 = 0$.  Then we have \[ \widetilde{\sigma}|_{B_q \bigcap U} = \sigma|_{B_q \bigcap U} \, \]
Since $B_q \bigcap U$ is dense in $B_q$ we get that $\widetilde{\sigma}$ is a $\nabla$-parallel section on $B_q$.
Now it is clear that $U' := U \bigcup B_q$ extends the domain of definition of $\sigma$ as a $\nabla$-parallel section.
\end{proof}

%\noindent

\begin{rem}\label{rem:SurSeg} The above result can be used to give a different solution of a problem discussed by  R. Bryant about the extension of a Killing vector field defined on a Riemannian surface minus a finite number of points
\url{http://mathoverflow.net/questions/122438/compact-surface-with-genus-geq-2-with-killing-field}

\end{rem}

\section{Proof of Theorem \ref{piano}}

The idea of the proof of Theorem \ref{piano} is to use a suitable estimate, involving the curvature of the connection, to control the parallel transport along curves. To this aim, we need the following lemma.
\begin{lem}\label{lemma.estimate}
Let $a\in\mathbb{R}$. Let $f(t)$ and $g(t)$ be two continuous functions for $t\geq a$. Let $u(t)$ be a $C^1$ function for $t\geq a$. If
\begin{equation}\label{eq.ineq}
\left\{
\begin{array}{l}
u'(t)\leq f(t)u(t)+g(t)\,, \quad t\geq a
\\
u(a)=u_0
\end{array}
\right.
\end{equation}
then
\begin{equation}\label{eq.ineq.2}
u(t)\leq u_0e^{\int_a^tf(x)dx} + \int_a^tg(s)e^{\int_s^tf(x)dx}ds.
\end{equation}
We underline that the right-hand side term of \eqref{eq.ineq.2} is the solution to the Cauchy problem given by \eqref{eq.ineq} if we have the equality in that system.
\end{lem}
\begin{proof}
A direct computation shows that \eqref{eq.ineq} can be written as
$$
\frac{d}{ds}\left(u(s)e^{\int_s^tf(x)dx}\right)\leq g(s)e^{\int_s^tf(x)dx}\,, \quad s\geq a\,, \quad u(a)=u_0
$$
and integrating over $s$ from $a$ to $t$ we obtain \eqref{eq.ineq.2}.
\end{proof}

\begin{prop}\label{prop.estimate}
Let $\pi:E\to \mathbb{R}^2$ be a vector bundle endowed with a linear connection $\nabla$ and $g$ be a metric on $\pi$ (not necessarily compatible with the connection $\nabla$).  Let $\omo_0$ and $\omo_1$ be two curves starting at $p \in \mathbb{R}^2$ and ending at $q \in \mathbb{R}^2$. Let $\omo : [0,1] \times [0,1] \rightarrow \mathbb{R}^2$ be a smooth homotopy between $\omo_0$ and $\omo_1$ relative to the endpoints $p,q$ which is $1$-$1$ when restricted to $(0,1) \times [0,1]$. Let $S := \omo([0,1] \times [0,1])$. Then
\begin{equation}
\|\tau_{\gamma_0}(\xi_p)-\tau_{\gamma_1}(\xi_p)\|_g \leq \|\xi_p\|_g \, \mathrm{R} \, e^{\mathrm{G}\mathrm{L}} \mu(S)
\end{equation}
where $\tau_{\gamma_i}(\xi_p)$ is the parallel transport from $p$ to $q$ of $ \xi_p \in\pi^{-1}(p)$ along $\gamma_i$, $\mu(S)$ is the area of $S$ w.r.t. the Lebesgue measure of $\mathbb{R}^2$, $\mathrm{R}$ is a constant depending only on the metric $g$ and on the curvature tensor $\mathrm{R}^{\nabla}$ of $\nabla$ on $S$,  $\mathrm{L}= \underset{s \in [0,1]}{\mathrm{max}} \{ \mathrm{length}(\gamma_s) \}$, $\gamma_s(t):=\gamma(t,s)$, is the maximal length of the curves of the homotopy $\omo$ and $\mathrm{G}$ is a constant controlling the norm of the tensor $\nabla g$ on $S$, i.e. the constants $\mathrm{G}$ and $\mathrm{R}$ depends only on the image $S$ of the homotopy and not on the homotopy itself.
\end{prop}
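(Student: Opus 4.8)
The plan is to study how the parallel transport of $\xi_p$ varies with the homotopy parameter $s$ and to bound that variation by the curvature, the area of $S$, and the failure of $g$ to be $\nabla$-parallel. First I would pull back the bundle $\pi$ and the connection $\nabla$ along $\gamma:[0,1]\times[0,1]\to\mathbb{R}^2$ and let $W(t,s)$ be the $\nabla$-parallel transport of $\xi_p$ along the curve $t\mapsto\gamma(t,s)$, so that $\frac{D}{\partial t}W=0$, $\;W(0,s)=\xi_p$, and $\xi(s):=W(1,s)=\tau_{\gamma_s}(\xi_p)$. Since $\gamma(1,s)\equiv q$, all the vectors $\xi(s)$ lie in the single fibre $\pi^{-1}(q)$, so that $\tau_{\gamma_0}(\xi_p)-\tau_{\gamma_1}(\xi_p)=\xi(0)-\xi(1)$ and it suffices to estimate $\int_0^1\|\partial_s\xi(s)\|_g\,ds$.

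Set $Z:=\frac{D}{\partial s}W$. Because $\frac{D}{\partial t}W=0$, the curvature identity for the pulled-back connection yields $\frac{D}{\partial t}Z=\frac{D}{\partial t}\frac{D}{\partial s}W=\mathrm{R}^{\nabla}(\gamma_s,\gamma_t)W$, where $\gamma_t=\partial_t\gamma$ and $\gamma_s=\partial_s\gamma$. Along $t=0$ the section $W(0,\cdot)\equiv\xi_p$ is constant over the constant curve $\gamma(0,\cdot)\equiv p$, whence $Z(0,s)=0$; along $t=1$ we have $\gamma_s(1,s)=\partial_s q=0$, so the covariant derivative $Z(1,s)$ agrees with the ordinary fibre derivative $\partial_s\xi(s)$. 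Thus, for each fixed $s$, the section $Z(\cdot,s)$ solves a linear first-order ODE in $t$ with vanishing initial datum and source $\mathrm{R}^{\nabla}(\gamma_s,\gamma_t)W$.

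Next I would control the two norm-growth effects caused by $g$ not being parallel. Since $\frac{D}{\partial t}W=0$, differentiating $\|W\|_g^2=g(W,W)$ gives $\frac{d}{dt}\|W\|_g^2=(\nabla_{\gamma_t}g)(W,W)$, hence $\frac{d}{dt}\|W\|_g\le\frac{\mathrm{G}}{2}|\gamma_t|\,\|W\|_g$ with $\mathrm{G}$ a bound for the norm of $\nabla g$ on $S$, and integrating yields $\|W(t,s)\|_g\le\|\xi_p\|_g\,e^{\frac{\mathrm{G}}{2}\mathrm{L}}$. The same computation applied to $Z$ (now keeping the source) gives $\frac{d}{dt}\|Z\|_g\le\frac{\mathrm{G}}{2}|\gamma_t|\,\|Z\|_g+\|\mathrm{R}^{\nabla}(\gamma_s,\gamma_t)W\|_g$. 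Here the crucial point is the antisymmetry of the curvature in its two vector arguments: $\|\mathrm{R}^{\nabla}(\gamma_s,\gamma_t)W\|_g\le \mathrm{R}\,|\gamma_s\wedge\gamma_t|\,\|W\|_g\le \mathrm{R}\,\|\xi_p\|_g\,e^{\frac{\mathrm{G}}{2}\mathrm{L}}\,|\gamma_s\wedge\gamma_t|$, where $|\gamma_s\wedge\gamma_t|$ is exactly the Jacobian of $\gamma$ and $\mathrm{R}$ bounds $\mathrm{R}^{\nabla}$ as a $2$-form on $S$. Applying Lemma \ref{lemma.estimate} with $u=\|Z\|_g$, $f=\frac{\mathrm{G}}{2}|\gamma_t|$, $g=\mathrm{R}\,\|\xi_p\|_g\,e^{\frac{\mathrm{G}}{2}\mathrm{L}}\,|\gamma_s\wedge\gamma_t|$ and $u_0=0$ gives $\|Z(1,s)\|_g\le \mathrm{R}\,\|\xi_p\|_g\,e^{\mathrm{G}\mathrm{L}}\int_0^1|\gamma_s\wedge\gamma_t|\,dt$.

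Finally I would integrate in $s$ and invoke the change-of-variables (area) formula: since $\gamma$ is one-to-one on $(0,1)\times[0,1]$, one has $\int_0^1\!\!\int_0^1|\gamma_s\wedge\gamma_t|\,dt\,ds=\mu(S)$, so that
\[
\|\tau_{\gamma_0}(\xi_p)-\tau_{\gamma_1}(\xi_p)\|_g\le\int_0^1\|Z(1,s)\|_g\,ds\le \|\xi_p\|_g\,\mathrm{R}\,e^{\mathrm{G}\mathrm{L}}\,\mu(S),
\]
which is the asserted estimate. The main obstacle is the non-compatibility of $g$ with $\nabla$: parallel transport is no longer an isometry, so both $W$ and the fibre error $Z$ may grow, and this is precisely what forces the exponential factor $e^{\mathrm{G}\mathrm{L}}$ and the dependence of $\mathrm{G}$ on $\nabla g$; keeping the two growth estimates separate and feeding them into Lemma \ref{lemma.estimate} is the delicate bookkeeping. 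The second point to get right is the antisymmetry of $\mathrm{R}^{\nabla}$, which replaces the naive factor $|\gamma_s||\gamma_t|$ by the Jacobian $|\gamma_s\wedge\gamma_t|$ and is exactly what turns the double integral into the area $\mu(S)$ rather than a larger quantity.
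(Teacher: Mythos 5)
Your proposal is correct and follows essentially the same route as the paper's own proof: the two-parameter parallel field ($W$ here, $X$ in the paper), the curvature commutation identity $\tfrac{D}{\partial t}\tfrac{D}{\partial s}W=\pm\mathrm{R}^{\nabla}(\partial_t,\partial_s)W$, the two Gronwall-type bounds via Lemma \ref{lemma.estimate} (one for $\|W\|_g$, one for the source term $Z=\tfrac{D}{\partial s}W$), and the wedge-norm bound turning the double integral into $\mu(S)$ by the area formula. Your write-up even makes explicit two points the paper leaves implicit, namely the initial condition $Z(0,s)=0$ and the change-of-variables step using injectivity of the homotopy.
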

\begin{proof}
We denote by $\|\xi\|_g := g(\xi,\xi)$ the norm of the vector $\xi_p$ of the fiber $E_p = \pi^{-1}(p)$.
If $v$ is a tangent vector of $\mathbb{R}^2$, its norm $\|v\|$ is taken w.r.t. the flat standard Riemannian metric, i.e. $\|v\|$ is the length of the vector $v$.

\noindent
Regarding the curvature tensor $\mathrm{R}^{\nabla}$ as a map $\mathrm{R}^{\nabla} : \Lambda^2{T_{(x,y)} \mathbb{R}^2} \rightarrow \mathrm{End} (E_{(x,y)})$ we have
\[
g(\mathrm{R}^{\nabla}(v \wedge w) \eta , \xi) = g(\mathrm{R}^{\nabla}(v,w) \eta, \xi)
\]
with $v,w \in T_{(x,y)}\mathbb{R}^2$ and $\eta, \xi \in E_{(x,y)}$. Since $S$ is compact, there exists a constant $\mathrm{R}$ such that
\[ g(\mathrm{R}^{\nabla}(v \wedge w) \eta , \xi) \leq \mathrm{R} \| v \wedge w \| \,  \|\xi\|_g \,  \|\eta\|_g \]
for all tangent vectors $v,w$ of $S$ and $\eta, \xi \in \pi^{-1}(S)$, where $\| v \wedge w \|$ is the area of the parallelogram  spanned by $v,w$.

\noindent
We denote by $\partial_t$ and $\partial_s$, respectively, the vector fields $\frac{\partial \omo}{\partial t}$ and  $\frac{\partial \omo}{\partial s}$, both tangent to $S$ at the point $\gamma(t,s)$. Let us define  $X(t,s)$ as the parallel transport of $\xi_p \in\pi^{-1}(p)$ along $\gamma_s$ at the instant $t$ (see Figure \ref{fig.transport}). We have
\begin{equation}\label{eq.first.ineq}
\|\tau_{\gamma_0}(\xi_p)-\tau_{\gamma_1}(\xi_p)\| _g = \| X(1,1)-X(1,0)\| _g \leq \int_0^1\left\| \frac{D}{ds}X(1,s) \right\| _g ds.
\end{equation}
The symbol $\frac{D}{ds}X(t,s)$ stands for the covariant derivative along the curve $s\to\gamma_t(s) := \gamma(t,s)$ (i.e. $t$ is fixed) associated with $\nabla$. Thus, for $(t,s) \in (0,1) \times (0,1)$
$\frac{D}{ds}X(t,s) =  \nabla_{\partial_s} X(t,s) $ and $\frac{D}{ds}X(1,s) = \frac{\partial X(1,s)}{\partial s}$ is the derivative in the vector space $E_q$ of the curve $X(1,s) \in E_q$, see Chapter 2 of \cite{DoC92} for details.
So, the above estimate is obtained by applying the fundamental theorem of the integral calculus.
\begin{figure}[h!]
\begin{center}
\includegraphics[width=0.5\textwidth]{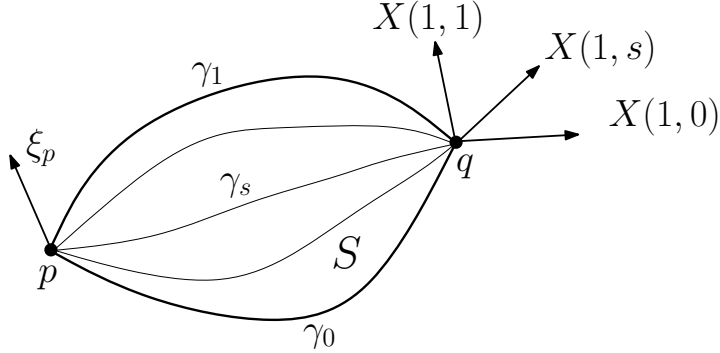}
\end{center}
\caption{$X(t,s)$ is constructed by parallel transporting $v$ along $\gamma_s$.}
\label{fig.transport}
\end{figure}

\noindent
The tensor $(\nabla_v g)(\xi,\eta) := v \big(g(\xi,\eta)\big)- g(\nabla_v \xi, \eta) - g(\xi, \nabla_v \eta)$ is continuous so that by the compactness of $S$ there exists a constant $\mathrm{G}$ such that
\[
(\nabla_v g)(\xi,\eta) \leq \mathrm{G} \|v\| \, \|\xi\|_g \, \| \eta \|_g
\]
where $\|v\|$ is the norm of the tangent vector $v$ of $S$ and $\eta, \xi \in \pi^{-1}(S)$. Then
$$
{\partial_t}\|X\|_g^2=\nabla_{\partial_t}g(X,X)\leq \mathrm{G} \|\partial_t\| \, \|X\|_g^2
$$
so that, in view of Lemma \ref{lemma.estimate}, we obtain
\begin{equation}\label{norma}
\|X(t,s)\|_g \leq \|\xi_p\|_g \left(e^{\int^t_0 \mathrm{G} \|\partial_t\|_{(t',s)} dt')}\right)^{\frac{1}{2}}\leq \|\xi_p\|_g e^{\frac{\mathrm{G} \,  \mathrm{L}}{2}}
\end{equation}
where $\mathrm{L}= \underset{s \in [0,1]}{\mathrm{max}} \{ \mathrm{length}(\gamma_s) \}$.

\noindent
Now we want to estimate $\left\|\frac{DX}{ds}\right\|_g$.  From the equation
\[
{\partial_t}\left\|\frac{DX}{ds}\right\|^2=2g\left( \frac{D}{dt}\frac{DX}{ds}\,,\,\frac{DX}{ds}\right)+ \nabla_{\partial_t}g\left(\frac{DX}{ds}\,,\,\frac{DX}{ds}\right)
=
2g\left( \mathrm{R}^\nabla(\partial_t,\partial_s)X\,,\,\frac{DX}{ds}\right)+ \nabla_{\partial_t}g\left(\frac{DX}{ds}\,,\,\frac{DX}{ds}\right)
\]
and the above inequalities we get
\[
{\partial_t}\left\|\frac{DX}{ds}\right\|_g^2 \leq 2\mathrm{R}\,  \| \partial_t \wedge \partial_s\| \, \, \|\xi_p\|_g e^{\frac{\mathrm{G} \,  \mathrm{L}}{2}} \, \left\|\frac{DX}{ds}\right\|_g + \mathrm{G} \, \|\partial_t\| \, \left\|\frac{DX}{ds}\right\|_g^2
\]
which implies
\[ {\partial_t} \left\|\frac{DX}{ds}\right\|_g \leq \mathrm{R}\,
\| \partial_t \wedge \partial_s\| \, \, \|\xi_p\|_g e^{\frac{\mathrm{G} \,  \mathrm{L}}{2}}  + \frac{\mathrm{G \, \|\partial_t\|}}{2} \left\|\frac{DX}{ds}\right\|_g \, .
\]
By Lemma \ref{lemma.estimate} we obtain
\[
\left\|\frac{DX(t,s)}{ds}\right\|_g \leq  \int_0^t \mathrm{R}\,  \| \partial_t \wedge \partial_s\|_{(t',s)} \, \, \|\xi_p\|_g e^{\frac{\mathrm{G} \,  \mathrm{L}}{2}} e^{\left(\int_{t'}^{t} \frac{\mathrm{G} \, \|\partial_t\|_{(t'',s)}}{2} dt''\right)}  d t'
\]
and so
\[
\left\|\frac{DX(t,s)}{ds}\right\|_g \leq  \|\xi_p\|_g \, \mathrm{R} \, e^{\mathrm{G}\mathrm{L}} \int_0^t \,  \| \partial_t \wedge \partial_s\|_{(t',s)} \, \, dt'  \, .
\]
Finally from equation (\ref{eq.first.ineq}) we have
\[ \|\tau_{\gamma_0}(\xi_p)-\tau_{\gamma_1}(\xi_p)\|_g\leq \int_0^1\left\| \frac{D}{ds}X(1,s) \right\|_g ds \leq
\|\xi_p\|_g \, \mathrm{R} \, e^{\mathrm{G}\mathrm{L}}\int_0^1 \int_0^1 \,  || \partial_t \wedge \partial_s|| \, \, dt ds \]
which is we wanted to show.
\end{proof}

\begin{proof}[\bf Proof of Theorem \ref{piano}]

\noindent
Assume that condition $(i)$ of Theorem \ref{piano} holds. Let $p_0 \in U$ be the point given by condition $\mathcal{R}$ and
$\xi$ the smooth section defined on the whole $\mathbb{R}^2$ obtained by parallel transporting $\xi(p_0):= \sigma(p_0)$ along the radial straight lines starting at $p_0$. Note that $\sigma \equiv \xi$ near $p_0$.
We claim that $\sigma(p) = \xi(p) $ for all $p \in V$ where $V$ is a dense subset of the domain $U$ of $\sigma$.
Fix $p \in V$ and the compact $K_p$ containing the segment $\overline{p_0p}$ as in Definition \ref{conditionR}.
We relabel the segments $I_i$ (and the corresponding curves $\gamma_i$ and regions $S_i$) of Definition \ref{conditionR} in order to obtain a sequence of subsegments on the oriented segment $\overrightarrow{p_0p}$. Let now $a_i$ and $b_i$ be the endpoints of $I_i$.
The strategy is to apply the estimate of Proposition \ref{prop.estimate} to each region $S_i$.
Since $g$ is compatible with $\nabla$, the constant $\mathrm{G}$ which appears in Proposition \ref{prop.estimate} is zero.
Since the regions $S_i$ are inside the compact set $K_p$ we have an uniform bound $\mathrm{R}$ for the norm of the curvature tensor $\mathrm{R}^{\nabla}$ on $K_p$. We have that

\[ \begin{aligned} \| \xi(p) - \sigma(p) \|_g  &=  \| \xi(b_n) - \sigma(b_n) \|_g =  \| \tau_{I_{n}}\xi(a_n) - \tau_{\gamma_{n}}\sigma(a_n) \|_g  = \\
  &= \| \tau_{I_n}\xi(a_n) - \tau_{I_n} \sigma(a_n) +  \tau_{I_n} \sigma(a_n) -\tau_{\gamma_n} \sigma(a_n) \|_g \\
  &\leq \| \tau_{I_n}\xi(a_n) - \tau_{I_n} \sigma(a_n)  \|_g + \|  \tau_{I_n} \sigma(a_n) -\tau_{\gamma_n} \sigma(a_n) \|_g \\
  &\leq \| \xi(a_n) - \sigma(a_n)  \|_g +  \|  \tau_{I_n} \sigma(a_n) -\tau_{\gamma_n} \sigma(a_n) \|_g \\
    &\leq \| \xi(b_{n-1}) - \sigma(b_{n-1})  \|_g +  \|  \tau_{I_n} \sigma(a_n) -\tau_{\gamma_n} \sigma(a_n) \|_g \\
 \end{aligned} \]
Since the region $S_n$, whose boundary are the segment $I_{n}$ and the curve $\gamma_n$, is simply connected (by Definition \ref{conditionR}), we can use the Riemann mapping theorem to map $S_n$ in a $1$-$1$ way onto the unit disc $\Delta \subset \mathbb{R}^2$. Under such a mapping the segment $I_{n}$ and the curve $\gamma_{n}$ are mapped, respectively, into two complementary arcs $\delta_1$ and $\delta_2$ of the unit circle. Hence we can construct a smooth $1$-$1$ homotopy $\omo : [0,1]\times[0,1] \rightarrow \Delta $ for $\delta_1$ and $\delta_2$ relative to the endpoints of the arcs for $\delta_1$ and $\delta_2$. The pullback, by the Riemann mapping, of such homotopy is a homotopy between $I_{n}$ and $\gamma_{n}$. Then, by applying Proposition \ref{prop.estimate} to the region $S_{n}$, we obtain
\[ \| \xi(b_n) - \sigma(b_n) \|_g
  \leq \| \xi(b_{n-1}) - \sigma(b_{n-1}) \|_g +  \mathrm{R}\| \sigma(a_n)\|_g \mu(S_{n}) = \| \xi(b_{n-1}) - \sigma(b_{n-1}) \|_g +   \mathrm{R }\| \sigma(p_0)\|_g \mu(S_{n})
\]
By repeating the above argument for $j=n-1, \cdots, 1$ we  get
\[
\| \xi(b_j) - \sigma(b_j) \|_g
  \leq  \| \xi(b_{j-1}) - \sigma(b_{j-1}) \|_g +   \mathrm{R }\| \sigma(p_0)\|_g \mu(S_{j})
\]
and so
\[
\| \xi(p) - \sigma(p) \|_g  = \| \xi(p_n) - \sigma(p_n) \|_g \leq \mathrm{R }\| \sigma(p_0)\|_g \sum_{i=1}^{n}\mu(S_i) \leq \mathrm{R }\| \sigma(p_0)\|_g \, \epsilon \,.
\]
In view of the arbitrariness of $\epsilon$, $\xi= \sigma$ on $V \subset U$. Thus, $\xi \equiv \sigma$ on $U$ as $V$ is dense in $U$. Finally, $\xi$ is parallel on the whole $\mathbb{R}^2$ due to the fact that $U$ is a dense subset of $\mathbb{R}^2$. This proves the theorem under the hypothesis of item $(i)$.

\smallskip
\noindent
Now, assume that condition $(ii)$ of Theorem \ref{piano} holds. Let $p_0 \in U$ be the point given by condition $\mathcal{R}^+$. Let $g$ be any metric on the vector bundle $\pi:E\to\mathbb{R}^2$ such that $\| \sigma(p_0)) \|_g = 1$.
As in the previous case, let $\xi$ be the smooth section defined on the whole $\mathbb{R}^2$ obtained by parallel transporting $\xi(p_0):= \sigma(p_0)$ along the radial straight lines starting at $p_0$.
%Notice that $\sigma \equiv \xi$ near $p_0$.
We shall prove that $\sigma(p) = \xi(p) $ for all $p \in V$ where $V$ is a dense subset of the domain $U$ of $\sigma$ of Definition \ref{conditionR+}.
Fix $p \in V$ and $K_p$ containing the segment $\overline{p_0p}$ as in Definition \ref{conditionR+}.
Let $\mathrm{G}$ be a bound for the norm of tensor $\nabla g$ on the compact subset $K_p$.
By using the same notations we introduced in the previous case, we have:
\[ \begin{aligned} \| \xi(p) - \sigma(p) \|_g  &=  \| \xi(p_n) - \sigma(p_n) \|_g =  \| \tau_{I_{n-1}}\xi(p_{n-1}) - \tau_{\gamma_{n-1}}\sigma(p_{n-1}) \|_g  = \\
  &= \| \tau_{I_{n-1}}\xi(p_{n-1}) - \tau_{I_{n-1}} \sigma(p_{n-1}) +  \tau_{I_{n-1}} \sigma(p_{n-1}) -\tau_{\gamma_{n-1}}\sigma(p_{n-1}) \|_g \\
  &\leq \| \tau_{I_{n-1}}\xi(p_{n-1}) - \tau_{I_{n-1}} \sigma(p_{n-1}) \|_g + \|  \tau_{I_{n-1}} \sigma(p_{n-1}) -\tau_{\gamma_{n-1}}\sigma(p_{n-1}) \|_g \\
  &\leq \| \xi(p_{n-1}) - \sigma(p_{n-1}) \|_g e^{\frac{\mathrm{G}}{2}\,\| p_n - p_{n-1}\|}+ \|  \tau_{I_{n-1}} \sigma(p_{n-1}) -\tau_{\gamma_{n-1}}\sigma(p_{n-1}) \|_g \\
  &\leq \| \xi(p_{n-1}) - \sigma(p_{n-1}) \|_g e^{\frac{\mathrm{G}}{2}\,\| p_n - p_{n-1}\|}+ \mathrm{R} \| \sigma(p_{n-1})\|_g e^{\mathrm{G} \mathrm{L}_{n-1}} \mu(S_{n-1}) \\
  &\leq \| \xi(p_{n-1}) - \sigma(p_{n-1}) \|_g e^{\frac{\mathrm{G}}{2}\,\| p_n - p_{n-1}\|}+ \mathrm{R} e^{\frac{\mathrm{G} \mathrm{L}_{\gamma}}{2}} e^{\mathrm{\mathrm{G}} \mathrm{L}_{n-1}} \mu(S_{n-1})
 \end{aligned} \]
  where the last two inequalities are obtained in view of Proposition \ref{prop.estimate} and inequality (\ref{norma}).\\
  By repeating the above argument for $j=n-1, \cdots, 1$ we  get
\[
\| \xi(p_j) - \sigma(p_j) \|_g \leq \| \xi(p_{j-1}) - \sigma(p_{j-1}) \|_g e^{\frac{\mathrm{G}}{2}\,\| p_j - p_{j-1}\|}+ \mathrm{R} e^{\frac{\mathrm{G} \mathrm{L}_{\gamma}}{2}} e^{\mathrm{\mathrm{G}} \mathrm{L}_{j-1}} \mu(S_{j-1})
 \, \, .
\]
Then
\[ \| \xi(p) - \sigma(p) \|_g  =  \| \xi(p_n) - \sigma(p_n) \|_g  \leq \mathrm{R}e^{\frac{\mathrm{G}}{2}\mathrm{L}_{\gamma}}e^{\frac{\mathrm{G}}{2}\|p-p_0\|}\sum_{i=0}^{n-1} e^{\mathrm{G} \mathrm{L}_i } \mu(S_i) \leq \mathrm{R} e^{\frac{\mathrm{G}}{2}\|p-p_0\|}\,\epsilon
\]
Since $\epsilon$ is arbitrary, $\xi(p) = \sigma(p)$ for any $p\in V \subset U$, hence $\xi \equiv \sigma$ on $U$ as $V$ is dense in $U$. Finally, $\xi$ is parallel on the whole $\mathbb{R}^2$ due to the fact that $U$ is a dense subset of $\mathbb{R}^2$. This prove the theorem under the hypothesis of item $(ii)$.
\end{proof}

\section{Extendability of projective vector fields and non-extendability of conformal ones}\label{sec.killing.conformal}

By recalling from the introduction that projective vector fields can be regarded as parallel sections of a suitable constructed linear connection, any such vector field defined on a open set $U\subset\mathbb{R}^2$ satisfying condition $\mathcal{R}^+$ of Definition \ref{conditionR+} can be extended to the whole $\mathbb{R}^2$. We underline that such result applies also to Killing, affine and homothetic vector fields as they are special projective vector fields.

\noindent
One can ask if this result holds for some more general class of vector fields, for instance for that of conformal ones. Below we see that, in dimension $2$, this is not the case. Indeed, in dimension $2$, conformal Killing vector fields cannot be regarded as parallel sections of a linear connection in a vector bundle, whereas, for dimension greater than $2$, they can be seen as parallel sections of the so called \emph{Geroch} connection \cite{Ge69,Ra06}.

\noindent
It is well-known that a conformal Killing vector field $X$ of the Euclidean plane $\mathbb{R}^2$ is given by a holomorphic function $f$. In fact, if
\[
X(x,y) = u(x,y) \frac{\partial}{\partial x} + v(x,y) \frac{\partial}{\partial y}
\]
is a conformal Killing vector field defined on $U$, then $f(z) = u(z) + \mathrm{i} v(z)$ belongs to the set of holomorphic functions $\mathcal{O}(U)$ on $U$. Indeed the flow $F^X_t$ consists of holomorphic maps, i.e., $F^X_t \in \mathcal{O}(U)$ for small values of $t$. Since the operators $\frac{\mathrm{d}}{\mathrm{d} t}$ and $\overline{\partial}$ commute, we have
\[  \left( \overline{\partial} \circ \frac{\mathrm{d}}{\mathrm{d} t} \right) F^X_t = \left(\frac{\mathrm{d}}{\mathrm{d} t} \circ \overline{\partial} \right) F^X_t = 0
\]
which shows that $f(z) = \frac{\mathrm{d}}{\mathrm{d} t}\big|_{t=0}  F^X_t(z)$ is holomorphic.

\smallskip\noindent
The function $1/z$ shows the existence of a conformal Killing vector field  $X$ defined on the open and connected subset $U = \mathbb{C}^* = \mathbb{C} \setminus 0$. Since $X$ is unbounded near $0$ (i.e. the Euclidean length of the vector field $X$ goes to infinity when approaching the origin), it cannot be extended to the whole plane $\mathbb{C}$. Observe that if a bounded conformal Killing vector field defined on an open set $U$ minus a discrete subset, then by Riemann's extension theorem it can be extended to the whole $U$ (see \cite{gianni_Killing} for a general discussion regarding arbitrary $2$-dimensional pseudo-Riemannian metrics).

\smallskip\noindent
Here we give an example of a \emph{bounded} conformal Killing vector $X$ field defined in the plane $\R^2$ minus a segment that cannot be extended to the whole plane $\R^2$. As explained in \cite[page 5]{Sp57}, in order to construct the Riemann surface of $w^2 = (z-r)\cdot(z-s)$, $r \neq s \in \mathbb{C}$, we cut $\mathbb{C} = \R^2$ along a segment $I$ connecting the branching points $r,s$ thus obtaining two single-valued branches, i.e. two holomorphic functions $w_1(z) \, , \, w_2(z)  : \mathbb{C} \setminus I \rightarrow \mathbb{C}$. Observe that both functions $w_1,w_2$ are bounded. Therefore, taking $f(z) = w_1(z)$, we get a bounded conformal Killing field which cannot be extended to the whole $\R^2$.

\begin{rem} \label{rem:SurSegI}The above example shows that Bryant's argument to solve the problem in MathOverflow can not be used if the domain of Killing vector field $X$ is the Riemann surface minus a segment. However, by using the Kostant connection and taking a radial extension of the parallel section associated to the Killing vector field $X$ we see that $X$ also extends in this case.
\end{rem}

\section{Extension of Killing vector fields of  $(\mathbb{R}^2,g)$ }\label{section:Kostant}

In this section we prove the following theorem.

\begin{thm}\label{thm:ExtKilling}
Let $M=(\mathbb{R}^2,g)$ be the plane endowed with a Riemann metric $g$. Let $\kappa$ be the Gaussian curvature of $g$. Assume that the differential $\mathrm{d }\kappa$ never vanish on $\mathbb{R}^2$.
Let $U \subset \mathbb{R}^2$ be a connected open and dense subset. If $(U,g)$ admits a Killing vector field $X$,
then it extends to a Killing vector field of $(\mathbb{R}^2,g)$.
\end{thm}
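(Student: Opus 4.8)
The plan is to use the hypothesis $\mathrm{d}\kappa\neq 0$ to completely pin down the direction of $X$ and thereby reduce the extension problem to a single linear first-order equation with \emph{globally} defined coefficients. First I would record the elementary rigidity coming from the Killing property: the flow of $X$ consists of isometries and hence preserves the Gaussian curvature, so $X\kappa=\mathrm{d}\kappa(X)=0$. Since $\mathrm{d}\kappa$ never vanishes, the level sets of $\kappa$ are regular curves and $X$ is everywhere tangent to them. Letting $J$ denote the rotation by $\pi/2$ determined by $g$ and the orientation, the field $Y:=J\,\grad\kappa$ is smooth, \emph{nowhere vanishing} and globally defined on $\mathbb{R}^2$, and it is tangent to the level curves of $\kappa$. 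Consequently on $U$ one may write $X=\psi\,Y$ with $\psi:=g(X,Y)/\|Y\|^2\in C^\infty(U)$. (If $X$ vanished on an open subset of $U$ then, $X$ being a parallel section of the Kostant connection \eqref{eq.conn.Kostant} over the connected set $U$, it would vanish identically and extend trivially; so I may assume $\{X\neq 0\}$ is dense in $U$.)

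Next I would extract the two equations governing $\psi$. Every Killing field is divergence-free, and $\mathrm{div}(J\grad\kappa)=0$ identically on an oriented surface; since $\mathrm{div}(\psi Y)=Y\psi+\psi\,\mathrm{div}\,Y$, this already forces $Y\psi=0$, i.e. $\psi$ is constant along the level curves of $\kappa$. Write $N:=\grad\kappa/\|\grad\kappa\|$ and $T:=JN$, so that $Y=\|\grad\kappa\|\,T$ and $T\psi=0$. Using $\mathcal{L}_{\psi Y}g=\psi\,\mathcal{L}_Y g+\mathrm{d}\psi\otimes Y^\flat+Y^\flat\otimes\mathrm{d}\psi$ together with $Y^\flat(N)=0$, $Y^\flat(T)=\|\grad\kappa\|$, I would expand $\mathcal{L}_X g=0$ in the frame $\{N,T\}$: the $(T,T)$- and $(N,N)$-components reduce to the automatically satisfied identity $\psi\,\mathrm{Hess}\,\kappa(N,T)=0$, while the $(N,T)$-component — the only one containing $N\psi$ — reads
\[
(N\psi)\,\|\grad\kappa\|+\psi\,(\mathcal{L}_Y g)(N,T)=0,
\]
that is $N\psi=\rho\,\psi$ with $\rho:=-(\mathcal{L}_Y g)(N,T)/\|\grad\kappa\|$ a smooth function defined on all of $\mathbb{R}^2$ (it is built only from $g$ and $\kappa$). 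Combining this with $T\psi=0$ gives $\mathrm{d}\psi=\psi\,\omega$ on $U$, where $\omega:=\rho\,N^\flat$ is a globally defined smooth $1$-form.

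The final step is the integration. Differentiating $\mathrm{d}\psi=\psi\,\omega$ yields $\psi\,\mathrm{d}\omega=0$, so $\mathrm{d}\omega=0$ on the dense set $\{\psi\neq 0\}$ and hence, by continuity and density of $U$ in $\mathbb{R}^2$, everywhere. Since $\mathbb{R}^2$ is simply connected, $\omega=\mathrm{d}\Phi$ for some $\Phi\in C^\infty(\mathbb{R}^2)$, and on the connected set $U$ the relation $\mathrm{d}(\psi e^{-\Phi})=0$ forces $\psi=c\,e^{\Phi}$ for a single constant $c$. I would then set $\widetilde X:=c\,e^{\Phi}\,Y$ on all of $\mathbb{R}^2$: it is smooth, agrees with $X$ on $U$, and since $\mathcal{L}_{\widetilde X}g$ is a smooth tensor vanishing on the dense subset $U$ it vanishes identically, so $\widetilde X$ is the desired global Killing extension.

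The main obstacle I anticipate is the careful derivation of the transverse equation $N\psi=\rho\,\psi$ from the Killing equation and the verification that the associated $1$-form $\omega$ is genuinely global and closed; the subsidiary delicate point is the treatment of the zero set of $X$, where the parallel-section rigidity over the connected domain $U$ (via the Kostant connection) is what legitimizes both the density argument and the clean exponential formula $\psi=c\,e^{\Phi}$.
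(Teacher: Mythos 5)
Your proposal is correct, but it follows a genuinely different route from the paper's. The paper stays inside its general parallel-section framework: it regards the Killing field as a $\widetilde{\nabla}$-parallel section of the Kostant bundle $TM\oplus\mathfrak{so}(TM)$ of \eqref{eq.conn.Kostant}, computes in Lemma~\ref{lemma:rank2} the curvature $R^K$ and its first covariant derivatives in isothermal coordinates, and uses $\mathrm{d}\kappa\neq 0$ to conclude that near any boundary point $p_0$ of $U$ one of the operators $(\widetilde{\nabla}_{\partial_x}R^K)_{\partial_x\partial_y}$, $(\widetilde{\nabla}_{\partial_y}R^K)_{\partial_x\partial_y}$ has rank $2$; its kernel is then a line bundle $\mathcal{L}$ which, since the parallel section induced by $X$ takes values in it over the dense set $B_{p_0}\cap U$, is $\widetilde{\nabla}$-parallel and flat, so the section extends over the simply connected disk $B_{p_0}$, and this is repeated at every boundary point. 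You instead work directly in $TM$: the identity $X\kappa=0$ together with $\mathrm{d}\kappa\neq 0$ pins down the direction $X=\psi\, J\grad \kappa$ (this is essentially the same line that appears in the paper as the $TM$-projection of the kernel bundle $\mathcal{L}$), and the Killing equations collapse to the single linear equation $\mathrm{d}\psi=\psi\,\omega$ with $\omega$ a globally defined $1$-form; closedness of $\omega$ on the dense open set $\{\psi\neq 0\}$, the Poincar\'e lemma on $\mathbb{R}^2$, and connectedness of $U$ yield the explicit formula $\psi=c\,e^{\Phi}$ and hence the extension $c\,e^{\Phi}J\grad\kappa$ on all of $\mathbb{R}^2$ in one stroke. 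Your argument buys elementarity and globality: no tractor-type curvature computations are needed (the only rigidity input, that a Killing field vanishing on an open subset of connected $U$ vanishes identically, is classical), and the extension is produced by one global formula rather than disk-by-disk across the boundary. The paper's argument buys generality: the kernel-line-bundle technique makes sense for parallel sections of an arbitrary linear connection whose curvature derivatives have large rank, where no explicit description of the sought section, such as your $\psi\,J\grad\kappa$, is available. One point in your write-up deserves to be made explicit: the $(N,N)$- and $(T,T)$-components of $\mathcal{L}_Xg=0$ reduce to $\psi\,\mathrm{Hess}\,\kappa(N,T)=0$, which is not an identity of an arbitrary metric but holds here because $X$ preserves both $g$ and $\kappa$, hence $\|\grad\kappa\|$, wherever $X\neq 0$; so it constrains $g$ rather than $\psi$, and your claim that only the $(N,T)$-component carries information about $\psi$ is justified. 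Note also that your exponential formula by itself disposes of the zero set of $X$, since $c=0$ exactly when $X\equiv 0$ on $U$.
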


\noindent
For the proof of this theorem we will use the Kostant connection.

\subsection{Local description of the Kostant connection}

Let $(x,y)$ be local isothermal coordinates about a point of $(\mathbb{R}^2,g)$, i.e. the metric $g$ is given by
\[
ds^2 =  \lambda (dx^2 + dy^2) \, .
\]

\noindent
Let $J$ be the complex structure given by $J(\partial_x) = \partial_y$ and $J(\partial_y) = -\partial_x$. Recall that $J$ is parallel w.r.t. the Levi-Civita connection of $ds^2$.

\noindent
Consider the bundle $TM \oplus \mathfrak{so}(TM) $ endowed with the Kostant connection $\widetilde{\nabla}$ given by equation (\ref{eq.conn.Kostant}).
The sections $\xi_1 := (\partial_x,0)$, $\xi_2:=(\partial_y,0)$ and $\xi_3:=(0,J)$ are linearly independent, so that they form a frame of $TM \oplus \mathfrak{so}(TM)$. In order to prove Theorem \ref{thm:ExtKilling}, we need the following technical lemma.

\begin{lem}\label{lemma:rank2}
Let $R^K$ be the curvature tensor of $\widetilde{\nabla}$ and $\kappa$ the Gaussian curvature of $g$. The matrices of the operators $R^K_{\partial_x \partial_y} , (\widetilde{\nabla}_{\partial_x} R^K)_{\partial_x \partial_y}$ and  $(\widetilde{\nabla}_{\partial_y} R^K)_{\partial_x \partial_y}$ w.r.t. the frame $\{\xi_1, \xi_2, \xi_3\}$ are
\[ R^K_{\partial_x \partial_y} = \left(
                                   \begin{array}{ccc}
                                     0 & 0 & 0 \\
                                     0 & 0 & 0 \\
                                     -\kappa_x \lambda & -\kappa_y \lambda & 0 \\
                                   \end{array}
                                 \right)
\]

\[ (\widetilde{\nabla}_{\partial_x} R^K)_{\partial_x \partial_y} = \left(
                                   \begin{array}{ccc}
                                     0 & 0 & 0 \\
                                     \kappa_x \lambda & \kappa_y \lambda & 0 \\
                                     * & * & -\kappa_y \lambda \\
                                   \end{array}
                                 \right)
\]

\[ (\widetilde{\nabla}_{\partial_y} R^K)_{\partial_x \partial_y} = \left(
                                   \begin{array}{ccc}
                                     0 & 0 & 0 \\
                                     \kappa_x\lambda & \kappa_y \lambda & 0 \\
                                     *  & * & -\kappa_x \lambda \\
                                   \end{array}
                                 \right)
\]
\end{lem}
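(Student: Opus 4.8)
The plan is to reduce the entire computation to two inputs supplied by the isothermal coordinates: the Christoffel symbols of $g$ and the Levi-Civita curvature. Since $ds^2=\lambda(dx^2+dy^2)$, the $\Gamma^k_{ij}$ are the usual logarithmic-derivative expressions in $\lambda$ (e.g. $\Gamma^x_{xx}=\Gamma^y_{xy}=-\Gamma^x_{yy}=\tfrac{\lambda_x}{2\lambda}$, and the $y$-analogues), and in dimension two the whole Riemann tensor is carried by $\kappa$, giving $\mathrm{R}(\partial_x,\partial_y)=-\kappa\lambda\,J$ as an endomorphism of $TM$. This last identity is the only place $\kappa$ enters, and it is already consistent with the entries $-\kappa_x\lambda,-\kappa_y\lambda$ of the first matrix. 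Throughout I would use repeatedly that $J$ is parallel for the Levi-Civita connection, i.e. $\nabla J=0$.

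First I would write out the connection matrices $\omega_x,\omega_y$ of $\widetilde\nabla$ in the frame $\{\xi_1,\xi_2,\xi_3\}$ by plugging each $\xi_i$ into the definition \eqref{eq.conn.Kostant}. The $TM$-slots simply reproduce the Christoffel symbols, while the $\mathfrak{so}$-slots are produced by the term $-\mathrm{R}(X,Y)$; for instance $\widetilde\nabla_{\partial_x}\xi_2=\tfrac{\lambda_y}{2\lambda}\xi_1+\tfrac{\lambda_x}{2\lambda}\xi_2+\kappa\lambda\,\xi_3$. The crucial structural observation, needed later, is that $\widetilde\nabla_{\partial_x}\xi_3$ and $\widetilde\nabla_{\partial_y}\xi_3$ have their $TM$-slot equal to $-J(\partial_x)$ resp. $-J(\partial_y)$, so the line $\mathrm{span}(\xi_3)$ is \emph{not} $\widetilde\nabla$-parallel: differentiating a section that takes values in $\mathrm{span}(\xi_3)$ pushes it into the $\xi_1,\xi_2$ directions.

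Next, for $R^K_{\partial_x\partial_y}$ I would use that $\partial_x,\partial_y$ commute, so $R^K(\partial_x,\partial_y)=[\widetilde\nabla_{\partial_x},\widetilde\nabla_{\partial_y}]$, and expand in the two slots of $TM\oplus\mathfrak{so}(TM)$. The $TM$-component works out to $\mathrm{R}(\partial_x,\partial_y)Z+\mathrm{R}(\partial_y,Z)\partial_x-\mathrm{R}(\partial_x,Z)\partial_y$, which vanishes identically by the first Bianchi identity; this is exactly what forces the top two rows of the first matrix to be zero. The $\mathfrak{so}$-component, after applying the second Bianchi identity to the mixed terms, reduces to $(\nabla_Z\mathrm{R})(\partial_x,\partial_y)$ acting through the $TM$-part $Z$ of the section; since $\mathrm{R}(\partial_x,\partial_y)=-\kappa\lambda J$ and $\nabla J=0$, the Christoffel corrections in $(\nabla_{\partial_x}\mathrm{R})(\partial_x,\partial_y)$ cancel all the $\lambda$-derivative terms and leave precisely $-\kappa_x\lambda\,J$, and likewise $-\kappa_y\lambda\,J$ in the $y$-direction. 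Reading these off in the frame gives the first matrix.

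Finally, for the two covariant derivatives I would treat $R^K$ as an $\mathrm{End}(E)$-valued $2$-form and use
\[
(\widetilde\nabla_U R^K)(\partial_x,\partial_y)=\nabla^{\mathrm{End}}_U\!\big(R^K_{\partial_x\partial_y}\big)-R^K(\nabla_U\partial_x,\partial_y)-R^K(\partial_x,\nabla_U\partial_y),
\]
where $\nabla^{\mathrm{End}}$ is the connection induced by $\widetilde\nabla$. In the frame the first term is $\partial_U M_0+[\omega_U,M_0]$, with $M_0$ the first matrix; the remaining two terms are handled by the one-dimensionality of $2$-forms, $R^K(V,W)=(dx\wedge dy)(V,W)\,R^K_{\partial_x\partial_y}$, so they only contribute a Christoffel multiple of $M_0$. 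Carrying this out produces the nonzero second row and the diagonal $(3,3)$ entry: the nonzero lower-row entries arise exactly from $\nabla^{\mathrm{End}}_U$ differentiating the image of $R^K_{\partial_x\partial_y}$, which lies in $\mathrm{span}(\xi_3)$ and is therefore dragged into the $TM$-directions by $\widetilde\nabla\xi_3$, while the starred entries in the third row are the residual second-order $\kappa$-terms. The main obstacle is organisational rather than conceptual: one must keep the Levi-Civita connection (acting on the two base/form indices of $R^K$, producing the Christoffel corrections) rigorously separate from the induced $\widetilde\nabla$ (acting on the endomorphism part, producing the commutator $[\omega_U,M_0]$), and then use $\nabla J=0$ together with both Bianchi identities to collapse every $\lambda$-derivative except the single factor multiplying $\kappa_x$ or $\kappa_y$. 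This is precisely where sign bookkeeping must be watched most carefully.
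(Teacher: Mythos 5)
Your overall strategy is the same as the paper's: compute the connection matrices of $\widetilde{\nabla}$ in the frame $\{\xi_1,\xi_2,\xi_3\}$, obtain $R^K_{\partial_x\partial_y}$ as the commutator $[\widetilde{\nabla}_{\partial_x},\widetilde{\nabla}_{\partial_y}]$, and differentiate $R^K$ with the same Leibniz formula (Levi-Civita connection on the two form slots, induced connection on the endomorphism slot). Your Bianchi shortcuts are valid and are a clean streamlining of the paper's brute-force expansion: the $TM$-part of $R^K(Z,0)$ vanishes by the first Bianchi identity, and the $\mathfrak{so}$-part collapses via the second Bianchi identity to $(\nabla_Z\mathrm{R})(\partial_x,\partial_y)$, which indeed equals $-\kappa_x\lambda J$ (resp.\ $-\kappa_y\lambda J$) for $Z=\partial_x$ (resp.\ $\partial_y$). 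One small omission: that argument only treats sections of the form $(Z,0)$, i.e.\ the first two columns; the vanishing of the third column, $R^K_{\partial_x\partial_y}\xi_3=0$ for $\xi_3=(0,J)$, needs its own one-line check using $\nabla J=0$ and torsion-freeness.

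There is, however, a genuine inconsistency in your treatment of the derivative matrices. You claim the computation ``produces the nonzero second row'' for \emph{both} $(\widetilde{\nabla}_{\partial_x} R^K)_{\partial_x\partial_y}$ and $(\widetilde{\nabla}_{\partial_y} R^K)_{\partial_x\partial_y}$, and you correctly identify the mechanism: the image of $R^K_{\partial_x\partial_y}$ lies in $\mathrm{span}(\xi_3)$ and is dragged into $TM$-directions by $\widetilde{\nabla}\xi_3$. But that mechanism does not put the entries in the second row in the $\partial_y$-case: since $\widetilde{\nabla}_{\partial_x}\xi_3=-\xi_2$ while $\widetilde{\nabla}_{\partial_y}\xi_3=+\xi_1$, the dragging lands in the $\xi_2$-direction for $\widetilde{\nabla}_{\partial_x}$ but in the $\xi_1$-direction for $\widetilde{\nabla}_{\partial_y}$. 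Executing your plan faithfully gives $(\widetilde{\nabla}_{\partial_y} R^K)_{\partial_x\partial_y}\xi_1=\widetilde{\nabla}_{\partial_y}(-\kappa_x\lambda\,\xi_3)+(\text{terms in }\mathrm{span}(\xi_3))=-\kappa_x\lambda\,\xi_1+(\ast)\,\xi_3$, and similarly for the other columns, so that
\[
(\widetilde{\nabla}_{\partial_y} R^K)_{\partial_x\partial_y}=\begin{pmatrix} -\kappa_x\lambda & -\kappa_y\lambda & 0\\ 0 & 0 & 0\\ \ast & \ast & \kappa_x\lambda \end{pmatrix},
\]
which is \emph{not} the third matrix stated in the lemma (nonzero entries in the first row rather than the second, with opposite signs, and $(3,3)$ entry $+\kappa_x\lambda$ rather than $-\kappa_x\lambda$). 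So, as written, your proposal cannot produce the lemma's third matrix; what it produces is a corrected version of it. You need to carry out the $\partial_y$-computation explicitly and state what it actually yields, noting the discrepancy with the printed statement. Fortunately the discrepancy is harmless for the way the lemma is used: either version of the third matrix has rank $2$ wherever $\kappa_x\neq 0$, which together with the second matrix (rank $2$ wherever $\kappa_y\neq 0$) is all that the proof of Theorem \ref{thm:ExtKilling} requires.
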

%
%We want to compute the curvature tensors $R^K_{\partial_x,\partial_y} \xi_1$, $R^K_{\partial_x,\partial_y} \xi_2$ and $R^K_{\partial_x,\partial_y} \xi_3$, where $R^K$ is the curvature of $\widetilde{\nabla}$.

\begin{proof}
The proof of the above lemma is based on straightforward computations.
Let $\xi = (Z,0)$ be a section of the Kostant bundle. Then
\[ \widetilde{\nabla}_{\partial_x} \xi = (\nabla_{\partial_x} Z, -R(\partial_x, Z) )  \]
where $R(X,Y)Z = \kappa (X \wedge Y) Z$. Then
\[ \begin{aligned} \widetilde{\nabla}_{\partial_y} \widetilde{\nabla}_{\partial_x} \xi = (\nabla_{\partial_y} \nabla_{\partial_x} Z + R(\partial_x, Z)(\partial_y) , -\nabla_yR(\partial_x, Z) - R(\partial_y,\nabla_{\partial_x} Z))
\end{aligned}
\]
So
\[
\begin{aligned}
R^K_{\partial_x,\partial_y} \xi = (\nabla_{\partial_x} \nabla_{\partial_y} Z + R(\partial_y, Z)(\partial_x) , -\nabla_xR(\partial_y, Z) - R(\partial_x,\nabla_{\partial_y} Z) )\\
-(\nabla_{\partial_y} \nabla_{\partial_x} Z + R(\partial_x, Z)(\partial_y) , -\nabla_yR(\partial_x, Z) - R(\partial_y,\nabla_{\partial_x} Z) ) \\
= (0, \nabla_yR(\partial_x, Z) + R(\partial_y,\nabla_{\partial_x} Z) - \nabla_xR(\partial_y, Z) - R(\partial_x,\nabla_{\partial_y} Z))\\
(0,  (-\kappa \langle Z,\partial_y\rangle)_y J + \kappa \langle \nabla_{\partial_x}Z,\partial_x\rangle J - (\kappa \langle Z,\partial_x \rangle)_x J + \kappa \langle \nabla_{\partial_y}Z,\partial_y \rangle J )
\end{aligned}
\]
Then
\[
R^K_{\partial_x,\partial_y} \xi_1 = -\kappa_x \lambda \, \, \xi_3 \,, \quad R^K_{\partial_x,\partial_y} \xi_2 = -\kappa_y \lambda \, \, \xi_3 \,, \quad R^K_{\partial_x,\partial_y} \xi_3 = 0
\]

\vspace{0.5cm}
\noindent
Now we compute the covariant derivatives of the sections $\xi_1,\xi_2,\xi_3$.

\[
\begin{aligned}
\widetilde{\nabla}_{\partial_y} \xi_1 &= (\nabla_{\partial_y}\partial_x \, , \, -R(\partial_y,\partial_x))=
(\nabla_{\partial_y}\partial_x \, , \, \kappa. \partial_x \wedge \partial_y)
=(\nabla_{\partial_y}\partial_x \, , \, - \kappa \lambda J) =
\\
&= (\frac{\lambda_y}{2\lambda} \partial_x + \frac{\lambda_x}{2\lambda} \partial_y \, , \, - \kappa \lambda J)
= \frac{\lambda_y}{2\lambda}  \xi_1 + \frac{\lambda_x}{2\lambda} \xi_2  - \kappa \lambda \xi_3\\
\end{aligned}\]

\[\hspace{-2.2cm} \begin{aligned} \widetilde{\nabla}_{\partial_x} \xi_1 &= (\nabla_{\partial_x}\partial_x \, , \, 0) = (\frac{\lambda_x}{2\lambda} \partial_x - \frac{\lambda_y}{2\lambda} \partial_y, 0)
= \frac{\lambda_x}{2\lambda} \xi_1 -\frac{\lambda_y}{2\lambda}\xi_2
\end{aligned}\]

\[\hspace{-1.5cm} \begin{aligned} \widetilde{\nabla}_{\partial_y} \xi_2 &= (\nabla_{\partial_y}\partial_y \, , \, 0) = (-\frac{\lambda_x}{2\lambda} \partial_x + \frac{\lambda_y}{2\lambda} \partial_y, 0)=
 -\frac{\lambda_x}{2\lambda} \xi_1 + \frac{\lambda_y}{2\lambda}\xi_2
\end{aligned}\]

\[ \hspace{-3.5cm} \begin{aligned} \widetilde{\nabla}_{\partial_x} \xi_2 &= (\nabla_{\partial_x}\partial_y \, , \, -R(\partial_x,\partial_y))=
\frac{\lambda_y}{2\lambda}  \xi_1 + \frac{\lambda_x}{2\lambda} \xi_2  + \kappa \lambda \xi_3
\end{aligned}\]

\[\hspace{-2.8cm} \widetilde{\nabla}_{\partial_y} \xi_3 = (-J(\partial_y),0) = (\partial_x , 0) = \xi_1 \]
\[\hspace{-2.2cm} \widetilde{\nabla}_{\partial_x} \xi_3 = (-J(\partial_x),0) = (-\partial_y, 0) = -\xi_2 \]

\vspace{0.7cm}\noindent
Now we are in position to compute higher order covariant derivatives of $R^K$.

\[ \begin{aligned} (\widetilde{\nabla}_{\partial_x} R^K)_{\partial_x \partial_y} \xi_1 &= \widetilde{\nabla}_{\partial_x} (R^K_{\partial_x \partial_y} \xi_1 ) - R^K_{\nabla_{\partial_x}\partial_x \, \partial_y} \xi_1 - R^K_{{\partial_x} \, \nabla_{\partial_x}\partial_y}\xi_1 - R^K_{\partial_x \partial_y} \widetilde{\nabla}_{\partial_x} \xi_1\\
&= \widetilde{\nabla}_{\partial_x} (R^K_{\partial_x \partial_y} \xi_1 ) - R^K_{\frac{\lambda_x}{2 \lambda}\partial_x \, \partial_y} \xi_1 - R^K_{{\partial_x} \, \frac{\lambda_x}{2 \lambda}\partial_y}\xi_1- R^K_{\partial_x \partial_y} \widetilde{\nabla}_{\partial_x} \xi_1\\
&= \widetilde{\nabla}_{\partial_x} (R^K_{\partial_x \partial_y} \xi_1 ) - \frac{\lambda_x}{\lambda} R^K_{\partial_x \, \partial_y} \xi_1- R^K_{\partial_x \partial_y} \widetilde{\nabla}_{\partial_x} \xi_1\\
&= \widetilde{\nabla}_{\partial_x} (-\kappa_x \lambda \xi_3 ) - \frac{\lambda_x}{\lambda} R^K_{\partial_x \, \partial_y} \xi_1- R^K_{\partial_x \partial_y} \widetilde{\nabla}_{\partial_x} \xi_1\\
&= (-\kappa_x \lambda)_x \xi_3  - \kappa_x \lambda \widetilde{\nabla}_{\partial_x}\xi_3  - \frac{\lambda_x}{\lambda} R^K_{\partial_x \, \partial_y} \xi_1- R^K_{\partial_x \partial_y} \widetilde{\nabla}_{\partial_x} \xi_1\\
&= (-\kappa_x \lambda)_x \xi_3  - \kappa_x \lambda \widetilde{\nabla}_{\partial_x}\xi_3  + \lambda_x \kappa_x \xi_3- R^K_{\partial_x \partial_y} \widetilde{\nabla}_{\partial_x} \xi_1\\
&= (-\kappa_x \lambda)_x \xi_3  + \kappa_x \lambda \xi_2  + \lambda_x \kappa_x \xi_3- R^K_{\partial_x \partial_y} \widetilde{\nabla}_{\partial_x} \xi_1\\
&=  \kappa_x \lambda \xi_2  - \kappa_{xx} \lambda \xi_3- R^K_{\partial_x \partial_y} \widetilde{\nabla}_{\partial_x} \xi_1\\
&=  \kappa_x \lambda \xi_2  - \kappa_{xx} \lambda \xi_3- R^K_{\partial_x \partial_y}(\frac{\lambda_x}{2\lambda} \xi_1 - \frac{\lambda_y}{2\lambda}\xi_2)\\
&=  \kappa_x \lambda \xi_2  - \kappa_{xx} \lambda \xi_3+\frac{\lambda_x}{2\lambda} \kappa_x \lambda \xi_3 - \kappa_y \lambda \frac{\lambda_y}{2\lambda}\xi_3\\
&=  \kappa_x \lambda \xi_2  + \left( \frac{\lambda_x \kappa_x - \kappa_y \lambda_y}{2}- k_{xx}\lambda \right) \xi_3\\
\end{aligned} \]

\[ \begin{aligned} (\widetilde{\nabla}_{\partial_x} R^K)_{\partial_x \partial_y} \xi_2 &= \widetilde{\nabla}_{\partial_x} (R^K_{\partial_x \partial_y} \xi_2 ) - R^K_{\nabla_{\partial_x}\partial_x \, \partial_y} \xi_2 - R^K_{{\partial_x} \, \nabla_{\partial_x}\partial_y}\xi_2 - R^K_{\partial_x \partial_y} \widetilde{\nabla}_{\partial_x} \xi_2\\
&= \widetilde{\nabla}_{\partial_x} (R^K_{\partial_x \partial_y} \xi_2 ) - R^K_{\frac{\lambda_x}{2 \lambda}\partial_x \, \partial_y} \xi_2 - R^K_{{\partial_x} \, \frac{\lambda_x}{2 \lambda}\partial_y}\xi_2 - R^K_{\partial_x \partial_y} \widetilde{\nabla}_{\partial_x} \xi_2\\
&= \widetilde{\nabla}_{\partial_x} (R^K_{\partial_x \partial_y} \xi_2 ) - \frac{\lambda_x}{\lambda} R^K_{\partial_x \, \partial_y} \xi_2 - R^K_{\partial_x \partial_y} \widetilde{\nabla}_{\partial_x} \xi_2\\
&= \widetilde{\nabla}_{\partial_x} (-\kappa_y \lambda \xi_3 ) - \frac{\lambda_x}{\lambda} R^K_{\partial_x \, \partial_y} \xi_2- R^K_{\partial_x \partial_y} \widetilde{\nabla}_{\partial_x} \xi_2\\
&= (-\kappa_y \lambda)_x \xi_3  - \kappa_y \lambda \widetilde{\nabla}_{\partial_x}\xi_3  - \frac{\lambda_x}{\lambda} R^K_{\partial_x \, \partial_y} \xi_2- R^K_{\partial_x \partial_y} \widetilde{\nabla}_{\partial_x} \xi_2\\
&= (-\kappa_y \lambda)_x \xi_3  - \kappa_y \lambda \widetilde{\nabla}_{\partial_x}\xi_3  + \lambda_x \kappa_y \xi_3- R^K_{\partial_x \partial_y} \widetilde{\nabla}_{\partial_x} \xi_2\\
&= (-\kappa_y \lambda)_x \xi_3  + \kappa_y \lambda \xi_2  + \lambda_x \kappa_y \xi_3- R^K_{\partial_x \partial_y} \widetilde{\nabla}_{\partial_x} \xi_2\\
&=  \kappa_y \lambda \xi_2  - \kappa_{xy} \lambda \xi_3- R^K_{\partial_x \partial_y} \widetilde{\nabla}_{\partial_x} \xi_2\\
&=  \kappa_y \lambda \xi_2  - \kappa_{xy} \lambda \xi_3+ R^K_{\partial_x \partial_y}\left( \frac{\lambda_y}{2\lambda}  \xi_1 + \frac{\lambda_x}{2\lambda} \xi_2  + \kappa \lambda \xi_3\right)\\
&=  \kappa_y \lambda \xi_2  - \kappa_{xy} \lambda \xi_3+ R^K_{\partial_x \partial_y}\left( \frac{\lambda_y}{2\lambda}  \xi_1 + \frac{\lambda_x}{2\lambda} \xi_2 \right)\\
&=  \kappa_y \lambda \xi_2  - \kappa_{xy} \lambda \xi_3- \frac{\lambda_y \kappa_x}{2}  \xi_3 - \frac{\lambda_x\kappa_y}{2} \xi_3 \\
&=  \kappa_y \lambda \xi_2  - \left( \kappa_{xy} \lambda + \frac{\lambda_y \kappa_x}{2} + \frac{\lambda_x\kappa_y}{2} \right) \xi_3 \\
\end{aligned} \]

\[ \begin{aligned} (\widetilde{\nabla}_{\partial_x} R^K)_{\partial_x \partial_y} \xi_3 &= \widetilde{\nabla}_{\partial_x} (R^K_{\partial_x \partial_y} \xi_3 ) - R^K_{\nabla_{\partial_x}\partial_x \, \partial_y} \xi_3 - R^K_{{\partial_x} \, \nabla_{\partial_x}\partial_y}\xi_3 - R^K_{\partial_x \partial_y} \widetilde{\nabla}_{\partial_x} \xi_3\\
&=- R^K_{\partial_x \partial_y} \widetilde{\nabla}_{\partial_x} \xi_3
= R^K_{\partial_x \partial_y} \xi_2
= -\kappa_y \lambda \xi_3
\end{aligned} \]
The lemma follows by taking into account the above computations.
\end{proof}

\subsection{Proof of Theorem \ref{thm:ExtKilling}}

\begin{proof}
Since the domain $U$ of the Killing vector field $X$ is assumed to be dense it is enough to show that $X$ can be extended about any point of the boundary of $U$.
If the Killing vector field is zero then the lemma is trivial. So we will assume that $X$ is not zero. Recall that this implies that the zero set of $X$ is discrete.
Let $p_0$ be a boundary point of $U$.
Then Lemma \ref{lemma:rank2} implies that either $(\widetilde{\nabla}_{\partial_x} R^K)_{\partial_x \partial_y}$ or $(\widetilde{\nabla}_{\partial_y} R^K)_{\partial_x \partial_y}$ has rank 2 in a small disk $B_{p_0}$ of $p_0$.
Assume that  $(\widetilde{\nabla}_{\partial_x} R^K)_{\partial_x \partial_y}$ has rank 2 on $B_{p_0}$.
Then the kernel of $(\widetilde{\nabla}_{\partial_x} R^K)_{\partial_x \partial_y}$ defines a smooth real line bundle $\mathcal{L}$ of the restriction to $B_p$ of the Kostant bundle  $TM \oplus \mathfrak{so}(TM)$.

\smallskip\noindent
We claim that $\mathcal{L}$ is a flat parallel line bundle w.r.t. the Kostant connection $\widetilde{\nabla}$.

\medskip\noindent
In fact, let $\xi$ be a generator of $\mathcal{L}$ on $B_{p_0}$ and $Y$ any vector field of $B_{p_0}$. First we show that \begin{equation}\label{eq.xi.nabla.tilde}
\xi \wedge \widetilde{\nabla}_Y \xi \equiv 0.
\end{equation}
Observe that, on the intersection $B_{p_0} \cap U$, the parallel section $\sigma$ induced by the Killing vector field $X$ must take values in $\mathcal{L}$. Since the zero set of $X$ is discrete, we get that equality \eqref{eq.xi.nabla.tilde} holds on  $B_{p_0} \cap U$ hence it holds on $B_{p_0}$ in view of the fact we assume $U$ to be dense.

\smallskip\noindent
This shows that any covariant derivative of the generator $\xi$ is in $\mathcal{L}$, so $\mathcal{L}$ is $\widetilde{\nabla}$-parallel.
Thus $\mathcal{L}$ is flat since $\sigma$ is a parallel section taking values on $\mathcal{L}|_{B_{p_0} \cap U}$ with $U$ a dense subset. Then the section $\sigma$ can be extended to a parallel section of $\mathcal{L}$ on the whole $B_{p_0}$ because $B_{p_0}$  is simply connected.
This shows that $X$ extends to a Killing vector field of $G:= U \cup B_{p_0}$.
\end{proof}

\subsection*{Acknowledgments}
The authors thank D. Alekseevsky, S. Fornaro, T. Kirschner, V. Matveev, C. Olmos, P. Tilli and F. Vittone for useful suggestions and discussions.


\begin{thebibliography}{123456}



\bibitem{AB83}
Atiyah M.F., Bott R.: The Yang-Mills equations over Riemann surfaces.
Philos. Trans. Roy. Soc. London Ser. A \textbf{308}, no. 1505, 523-615 (1983).

\bibitem{ADL12}
Alt J., Di Scala A.J., Leistner T.: Isotropy representations of symmetric spaces as conformal holonomy groups,
\url{http://es.arxiv.org/abs/1208.2191} (2012).

\bibitem{BEG94}
Bailey T.N., Eastwood M.G., Gover A.R.: Thomas's structure bundle for conformal, projective and related structures. Rocky Mountain J. Math. \textbf{24}, no. 4, 1191-1217 (1994).

\bibitem{BFP02}
Blau M., Figueroa-O'Farrill J., Papadopoulos G.:  Penrose limits, supergravity and brane dynamics.
Classical Quantum Gravity \textbf{19}, no. 18,  4753-4805 (2002).


\bibitem{BMM08} Bryant R.L.,
Manno G., Matveev V.S.: A solution of a problem of Sophus
Lie: normal forms of two dimensional metrics admitting two
projective vector fields. Math. Ann. \textbf{340}, no. 2, 437-463 (2008).

\bibitem{BryantOver}
\url{http://mathoverflow.net/questions/122438/compact-surface-with-genus-geq-2-with-killing-field}

\bibitem{private} \v{C}ap A.:
Private communication.

\bibitem{C08} \v{C}ap A.:
Infinitesimal automorphisms and deformations of parabolic geometries. J. Eur. Math. Soc. (JEMS) \textbf{10}, no. 2, 415-437 (2008).

\bibitem{CGH11}
\v{C}ap A., Gover A.R., Hammerl M.:
Holonomy reductions of Cartan geometries and curved orbit decompositions. \url{http://es.arxiv.org/abs/1103.4497} (2011).


\bibitem{CO08}
Console S., Olmos C.: Level sets of scalar Weyl invariants and cohomogeneity. Trans. Amer. Math. Soc. \textbf{360}, no. 2, 629-641 (2008).


\bibitem{CO09}
Console S., Olmos C.: Curvature invariants, Killing vector fields, connections and cohomogeneity. Proc. Amer. Math. Soc. \textbf{137}, no. 3, 1069-1072 (2009).



\bibitem{DL11}
Di Scala A.J., Leistner T.: Connected subgroups of ${\rm SO}(2,n)$ acting irreducibly on $\Bbb R^{2,n}$.
Israel J. Math. \textbf{182}, 103-121 (2011).


\bibitem{DoC92}
Do Carmo M.F.: Riemannian Geometry. Birkh\"auser, Boston (1992).

\bibitem{Ge69}
Geroch R.: Limits of Spacetimes,
Comm. Math. Phys. \textbf{13}, 180-193 (1969).
\url{http://projecteuclid.org/download/pdf_1/euclid.cmp/1103841574}

\bibitem{G10}
Gover A.R.: Almost Einstein and Poincar\'e-Einstein manifolds in Riemannian signature.
J. Geom. Phys. \textbf{60}, no. 2, 182-204 (2010).

\bibitem{GPW} Gover R., Panai R., Willse T.: Nearly K\"ahler geometry and (2,3,5)-distributions via projective holonomy, arXiv:1403.1959.

\bibitem{Ko55}
Kostant B.: Holonomy and the Lie algebra of infinitesimal motions of a Riemannian manifold,
Trans. Amer. Math. Soc. \textbf{80}, 528-542 (1955).


\bibitem{gianni_Killing} Manno G., Metafune G.: On the extendability of conformal vector fields of $2$-dimensional manifolds. Differential Geom. Appl. \textbf{30}, 365-369 (2012).



\bibitem{Ra06}
Hannu Rajaniemi: Conformal Killing spinors in supergravity and
related aspects of spin geometry,
\url{http://www.maths.ed.ac.uk/pg/thesis/rajaniemi.pdf}

\bibitem{Sp57}
Springer G.: Introduction to Riemann Surfaces.
Addison-Wesley Publishing Company, Inc. (1957).

\end{thebibliography}
\end{document}